\documentclass{amsart}

\usepackage[all]{xy}
\usepackage{pictexwd, epsfig, amsthm, amssymb, amsmath, graphicx, psfrag,amsxtra, latexsym}
\usepackage{enumerate}

\newcommand{\mQ}{\mathbb Q}
\newcommand{\mZ}{\mathbb Z}
\newcommand{\mR}{\mathbb R}
\newcommand{\mF}{\mathbb F}
\newcommand{\mH}{\mathbb H}
\newcommand{\mV}{\mathcal V}
\newcommand{\mG}{\mathcal G}
\newcommand{\tK}{\tilde K}

\newcommand{\g}{\gamma}

\DeclareMathOperator{\fix}{Fix} 
\DeclareMathOperator{\stab}{Stab}  \DeclareMathOperator{\G}{\Gamma}

\DeclareMathOperator{\at}{Aut} 
\DeclareMathOperator{\iso}{Isom} 
 \DeclareMathOperator{\aut}{Aut}

\DeclareMathOperator{\sta}{Stab}  \DeclareMathOperator{\kz}{\mathbb K \mathbb
Z^{-\infty}} \DeclareMathOperator{\vc}{\mathcal V\mathcal C}

\DeclareMathOperator{\fin}{\mathcal F\mathcal I\mathcal N}

\theoremstyle{plain}
\newtheorem{Thm}{Theorem}

\newtheorem{corollary}[Thm]{Corollary}

\newtheorem{example}[Thm]{Example}

\theoremstyle{definition}

\theoremstyle{remark}

\begin{document}

\title[Lower algebraic $K$-theory of certain reflection groups.]
      {Lower algebraic $K$-theory of certain reflection groups.}
\author{Jean-Fran\c{c}ois\ Lafont}
\address{Department of Mathematics\\
         Ohio State University\\
         Columbus, OH  43210}
\email[Jean-Fran\c{c}ois\ Lafont]{jlafont@math.ohio-state.edu}
\author{Bruce A. Magurn}
\address{Department of Mathematics and Statistics\\
         Miami University\\
         Oxford, OH 45056}
\email[Bruce A. Magurn]{magurnba@muohio.edu}
\author{Ivonne J.\ Ortiz}
\address{Department of Mathematics and Statistics\\
         Miami University\\
         Oxford, OH 45056}
\email[Ivonne J.\ Ortiz]{ortizi@muohio.edu}

\begin{abstract}
For $P\subset \mathbb H^3$ a finite volume geodesic polyhedron, with
the property that all interior angles between incident faces are of
the form $\pi/m_{ij}$ ($m_{ij}\geq 2$ an integer), there is a
naturally associated Coxeter group $\Gamma _P$.  Furthermore, this
Coxeter group is a lattice inside the semi-simple Lie group
$O^+(3,1)=\iso(\mathbb H^3)$, with fundamental domain the original
polyhedron $P$. In this paper, we provide a procedure for computing the
lower algebraic $K$-theory of the integral group ring of such groups
$\Gamma _P$ in terms of the geometry of the polyhedron $P$. 
As an ingredient in the computation, we explicitly calculate the
$K_{-1}$ and $Wh$ of the groups $D_n$ and $D_n\times \mZ_2$, 
and we also summarize what is known about the $\tilde K_0$. 

\end{abstract}

\maketitle

%%%%%%%%%%%%%%%%%%%%%%%%%%%%%%%%%%%%%%%%%%%%%%%%%%%
\section{Introduction}

Algebraic $K$-theory is a family of covariant functors from the
category of rings to the category of abelian groups, and when
applied to a ring $R$, yields information about the category of
projective modules over $R$.  The algebraic $K$-theory functors (and
their relatives) are of great interest to topologists, particularly
when applied to integral group rings of discrete groups.  Indeed, it
was discovered in the 1960's that, for various natural problems in
geometric topology, obstructions to the solution (in high dimensions)
appeared as all the members of suitable $K$-theory groups.
%, and that conversely every
%obstruction could be realized as an inequivalent, unsolvable problem.

In view of this, one can understand the desire to obtain {\it
explicit} computations for the $K$-theory of integral group rings of
finitely generated groups. Recent work of Lafont-Ortiz \cite{LO1},
\cite{LO2} gave explicit computations for the lower algebraic
$K$-theory of all the {\it hyperbolic 3-simplex reflection groups}.
These are the lattices in the Lie group
$O^+(3,1)=\iso(\mathbb H^3)$, that are generated by reflections in the sides of a suitable
geodesic 3-simplex in $\mathbb H^3$.  Such groups were classified
by several different authors,
and there are precisely 32 of them up to isomorphism (see the 
discussion in the introduction of \cite{JKRT}).

Now consider $P\subset \mathbb H^3$ a finite volume geodesic
polyhedron, with the property that all the interior angles between
incident faces are of the form $\pi/m_{ij}$ ($m_{ij}\geq 2$ an
integer).  One can extend each of the (finitely many) faces to a
hyperplane (i.e. totally geodesic $\mH ^2$ embedded in $\mH ^3$),
and form the subgroup $\G _P \leq O^+(3,1)$ generated by reflections
in these hyperplanes. This group will always be a lattice in
$O^+(3,1)$, with fundamental domain the original polyhedron $P$.  We
will call such a group a {\it 3-dimensional hyperbolic reflection
group}.  A special case of this occurs if $P$ is a tetrahedron, in
which case the group $\G _P$ is one of the 32 hyperbolic
3-simplex reflection groups. This case of the tetrahedron is somewhat special, as
for most other combinatorial types of  {\it fixed} simple polytopes, there are in fact
infinitely many distinct groups $\G _P$ with polyhedron $P$ of the given combinatorial
type (see for instance the Appendix for the case of the cube). 
In this paper, our goal is to explain how the methods of \cite{LO2} can be
extended to provide computations of the lower
algebraic $K$-theory of the lattice $\G _P \leq O^+(3,1)$ for arbitrary polyhedron
$P$. 

We present background material in Section 2. In particular,
we remind the reader of existing results, that allow us to express
the lower algebraic $K$-theory of the group $\G _P$ (namely $Wh(\G_P)$ for $*=1$, $\tilde{K}_0(\mathbb Z \G_P)$ for $*=0$, and $K_*(\mathbb Z \G_P)$ for $* <0$) as a direct sum:
$$H_*^{\G _P}(E_{\fin}(\G _P);\mathbb K\mathbb Z^{-\infty})
\oplus \bigoplus _{V\in \mathcal V}H_*^{V}(E_{\fin}(V)\rightarrow*).
$$allowing us to break down its computation into that of the various summands.

The first term appearing
in the above splitting is a suitable equivariant generalized homology 
group of a certain space. There is a spectral sequence which allows
one to compute this homology group. The $E^2$-terms of this spectral
sequence are obtained by taking the homology of a certain chain 
complex. The groups appearing in this chain complex are given
by the lower algebraic $K$-theory of various finite groups, primarily 
dihedral groups $D_n$ and products $D_n\times \mZ_2$.
In Section 3, we provide explicit number theoretic formulas for 
the $K_{-1}$ and $Wh$ of these finite groups (Sections 3.1 and 3.4), and we summarize
what is known about the $\tilde K_0$ of these groups (Section 3.3); the $K_*$ for $* < -1$ are well known to vanish.

In Section 4, we analyze the chain complex, compute the 
$E^2$-terms in the spectral sequence, and use this information
to identify the term $H_*^{\G_P}(E_{\fin}(\G _P);\mathbb K\mathbb Z^{-\infty})$
within the range $*\leq 1$. 
For $*=0,1$, it is known that the remaining terms in the splitting
are  torsion. Specializing to the case $*=1$, this allows us (Theorem 4)
to give an explicit formula for the 
rationalized Whitehead group $Wh(\G _P) \otimes \mQ$, in terms of 
the combinatorics and geometry of the polyhedron $P$. Similarly, 
for $*\leq -1$, the remaining 
terms in the splitting are known to vanish. This allows us (Theorem 5)
to also provide similarly explicit, though much more complicated,
expressions for $K_{-1}(\mZ \G _P)$.

In Section 5, we focus on identifying the remaining terms in the splitting. 
Using geometric techniques, we show that only finitely many of these terms 
are non-zero. Furthermore, we show that the non-vanishing terms consist of
Bass Nil-groups, associated to various dihedral groups that can be 
identified from the geometry of the polyhedron $P$. 

Overall, the results encompassed in this paper give a general 
procedure for computing the 
lower algebraic $K$-theory of any such $\G _P$. More precisely, we
can use the geometry of the polyhedron $P$ to:
\begin{itemize}
\item give a completely explicit computation for $K_{-1}(\mZ \G _P)$,
\item give an expression for $\tilde K_0(\mZ \G _P)$ in terms of the 
$\tilde K_0$ of various dihedral groups, and products of dihedral
groups with $\mZ_2$ (the computation of which is a classical problem, see
Section 4.2), as well as certain Bass Nil-groups $NK_0(\mZ D_n)$,
\item give an expression for $Wh(\G _P)$ in terms of the Bass Nil-groups
$NK_1(\mZ D_n)$ associated to various dihedral groups (the calculation
of which is also a well known, difficult problem).
\end{itemize}
In particular, we see that the lower algebraic $K$-theory of $\G_P$
can be directly determined from the geometry of the polyhedron
$P$. 

Finally, in the Appendix to the paper (Section 6), we illustrate this process by 
computing the lower algebraic $K$-theory for a family of 
Coxeter groups  $\G _P$ whose associated polyhedron $P$ is the product
of an $n$-gon with an interval (see Example 7). As a second class of
examples, we compute the lower algebraic $K$-theory for an 
infinite family of Coxeter groups whose associated polyhedrons are
combinatorial cubes (see Example 8).

\vskip 20pt
%%%%%%%%%%%%%%%%%%%%%%%%%%%%%%%%%%%%%%%%%%%%%%%%%%%
\centerline{\bf Acknowledgments.}

\vskip 5pt

The work of the first author and the third author was partially supported by the NSF, 
under the grants DMS - 0606002 and DMS - 0805605. The first author was partly
supported by an Alfred P. Sloan research fellowship.

\vskip 20pt

%%%%%%%%%%%%%%%%%%%%%%%%%%%%%%%%%%%%%%%%%%%%%%%%%%%
\section{Background material: geodesic polyhedra in $\mH ^3$, Coxeter groups, and the FJIC}

In this section, we introduce some background material. We briefly discuss the
groups of interest (Section 2.1), and the possible polyhedra $P$ 
(Section 2.2). We also discuss how, given a Coxeter group $\G$, we can
algorithmically decide whether $\G \cong \G_P$ for some polyhedron $P\subset \mH^3$
(Section 2.3). Finally, we provide a review of the Farrell-Jones isomorphism
conjecture, and discuss its relevance to our problem (Section 2.4).

\subsection{Hyperbolic reflection groups}  Consider a geodesic
polyhedron $P\subset \mH^3$, having the property that every pair of
incident faces intersects at an (internal) angle $\pi/m_{ij}$, where
$m_{ij} \geq 2$ is an integer.  Associated to such a polyhedron, one
can form a labeled complete graph $\mG$ as follows:
\begin{itemize}
\item associate a vertex $v_i$ to every face $F_i$ of $P$,
\item if a pair of faces $F_i,F_j$ of $P$ intersect at an angle of $\pi/m_{ij}$, label
the corresponding edge of $\mG$ by the integer $m_{ij}$,
\item if a pair of faces of $P$ do not intersect, label the
corresponding edge of $\mG$ by $\infty$.
\end{itemize}
From the resulting labeled graph $\mG$, one can form a Coxeter group
$\G _P$ in the usual manner: one assigns a generator $x_i$ of order
two to each vertex $v_i$ of $\mathcal G$, and adds in relations
$(x_ix_j)^{m_{ij}}=1$ to every labeled edge of $\mG$ (with the
understanding that a relation is vacuous if the exponent is
$\infty$).

From the constraint on the angles of the polyhedron $P$, it is clear
that one has a homomorphism $\G _P \rightarrow O^+(3,1)$, obtained
by assigning to each generator $x_i\in \G _P$ the reflection in the
hyperplane extending the corresponding face $F_i$ of $P$.  This
morphism is in fact an embedding of $\G _P\hookrightarrow O^+(3,1)$
as a lattice, with fundamental domain precisely the original
polyhedron $P$.

Conversely, we observe that, from the Coxeter graph $\mG$, we can
readily recover both the combinatorial polyhedron $P$ and
the internal angles $\pi/m_{ij}$ associated to each edge in the
1-skeleton of $P$.

%%%%%%%%%%%%%%%%%%%%%%%%%%%%%%%%%%%%%%%%%%%%%%%%%%%

\subsection{Geodesic polyhedra in $\mH ^3$}  A natural question
is the realization question: given a combinatorial polyhedron $P$
with prescribed internal angles of the form $\pi/m_{e}$ ($m_{e}\geq 2$)
assigned to each edge $e$ of $P$, does it arise as a geodesic
polyhedron inside $\mH ^3$?  If so, we will say that the labeled
combinatorial polyhedron is {\em realizable} in $\mH^3$.

In fact, a celebrated result of Andreev \cite{An} provides a
complete characterization of finite volume geodesic polyhedra in
$\mH ^3$ with non-obtuse internal angles.  More precisely, given an
abstract combinatorial polyhedron $P$, with a $0<\theta_e\leq \pi/2$
assigned to each edge $e$, Andreev demonstrated that the following
two statements are equivalent:
\begin{enumerate}
\item the labeled combinatorial polyhedron $P$ is realizable in $\mH^3$,
and 
\item the collection $\theta_e$ satisfy a finite collection of
linear inequalities, which are explicitly given in terms of the
combinatorics of the polyhedron $P$.
\end{enumerate}

For a more specific discussion, we refer the reader to the recent
paper of Roeder, Hubbard, and Dunbar \cite{RHD}, or to the book of M. Davis
\cite[Section 6.10]{Da}.  The point we want
to emphasize is that, from a combinatorial polyhedron with
prescribed dihedral angles $\pi/m_e$, one can use Andreev's theorem
to easily check whether the labeled combinatorial polyhedron is
realizable in $\mH^3$.

%%%%%%%%%%%%%%%%%%%%%%%%%%%%%%%%%%%%%%%%%%%%%%%%%%%

\subsection{Coxeter groups as hyperbolic reflection groups}  The
Coxeter groups of interest here are canonically associated to a
certain class of geodesic polyhedra $P$ in $\mH^3$.  For these
Coxeter groups, our goal is to provide recipes for computing the
lower algebraic $K$-theory.  The reader might naturally be
interested in knowing, given a Coxeter group, whether it is one of
these 3-dimensional hyperbolic reflection groups.  We summarize 
here the procedure
for answering this question.  Let us assume that we are given a
Coxeter group $\G$ in terms of its complete Coxeter graph $\mG$
(i.e. the complete graph on the generators, with each edge labelled
by either an integer $\geq 2$, or by $\infty$).

First of all, we note that if we exclude the edges labelled $\infty$
in the graph $\mG$, we obtain a labeled graph $\mG ^\prime$.  If the
Coxeter group $\G$ was a hyperbolic reflection group, then $\mG
^\prime$ would have to be the dual graph to the corresponding
polyhedron $P$, and in particular, would have to be a {\it planar
graph}. Furthermore, the fact that $\mG^\prime$ is dual to a
polyhedron implies that it is 3-connected.  So from now on, let us
assume $\mG ^\prime$ is a planar, 3-connected graph.

Note that by a famous result of Steinitz \cite{St}, 3-connected, planar
graphs, are precisely the class of graphs that arise as 1-skeletons
of polyhedra (in this case, the dual of $P$). Furthermore, a
well-known result of Whitney \cite{Wh} states that 3-connected planar graphs
have a unique embedding in $S^2$. A detailed discussion of both these
results can be found in Ziegler's classic text, see \cite[Ch. 4]{Z}.
These two results now allow us to
deal with $\mG^\prime$ as a polyhedron, since it arises as the
1-skeleton of a unique polyhedron.

Secondly, let us assume that $P$ is one of our geodesic polyhedra,
and $\G _P$ is the associated Coxeter group.  If we take a vertex
$v\in P$, then the stabilizer of $v$ under the $\G _P$ has to be a
2-dimensional 
spherical reflection group. Since all the spherical Coxeter groups
are in fact spherical triangle groups, this immediately implies that at most
three faces of $P$ can contain $v$.  So in particular, if
$\mG^\prime$ is dual to the polyhedron $P$, we see that all the
faces of $\mG^\prime$ corresponding to non-ideal vertices of $P$
must in fact be triangles.

Thirdly, if we take an {\it ideal vertex} $v$ of $P$, then the
stabilizer of $v$ in $\G _P$ must be a planar Coxeter group, generated
by reflections in a planar polygon with all angles of the form 
$\pi/m_e$. Note that there are four such polygons: three triangles
(equilateral triangle, right isosceles, and the $(\pi/6,\pi/3,\pi/2)$ triangle),
or a square. This in turn implies that their are at most {\it four} faces in $P$
asymptotic to the vertex $v$.  So if $\mG^\prime$ is dual to $P$,
then the faces of $\mG^\prime$ corresponding to ideal vertices of
$P$ have either $3$ or $4$ sides.  Putting this together we see that
$\mG^\prime$ has the property that {\it all its faces are triangles
or squares}.

Finally, given such an $\mG^\prime$, with labels $m_e$ on the edges
(coming from the Coxeter graph), we can dualize $\mG^\prime$ to
obtain a polyhedron $P$.  We can assign to the edge $e^*$ of $P$
dual to the edge $e$ of $\mG^\prime$ the dihedral angle $\theta_e:=
\pi/m_e$.  This gives a labeled combinatorial polyhedron, to which
we can now apply Andreev's theorem, and find out whether it is
realizable in $\mH^3$.  This allows us to efficiently determine whether 
the original, abstract Coxeter group $\G$ is one to which the techniques 
of this paper apply.

Lastly, we point out that while these constraints are fairly
stringent, this nevertheless allows for infinitely many pairwise
non-isomorphic Coxeter groups $\G _P$ (see for instance Example
8 in Section 6 for infinitely many examples with polyhedron a 
combinatorial cube).

%%%%%%%%%%%%%%%%%%%%%%%%%%%%%%%%%%%%%%%%%%%%%%%%
\subsection{Isomorphism conjecture and splitting formulas}

The starting point for our computation is the Farrell-Jones
isomorphism conjecture, which predicts, for a group $G$, that the
natural map:
\begin{equation}
H_n^{G}(E_{\vc}G;\mathbb K\mathbb Z^{-\infty}) \rightarrow
H_n^{G}(*;\mathbb K\mathbb Z^{-\infty})  \cong K_n(\mathbb ZG)
\end{equation}
is an isomorphism for all $n$.  This conjecture is known to hold for
lattices in $O^+(3,1)$ for $n\leq 1$, by work of Farrell \& Jones
\cite{FJ1} in the cocompact case, and by work of Berkove, Farrell,
Juan-Pineda, and Pearson \cite{BFJP} in the non-cocompact case.
So to compute the (algebraic) right hand side, we can instead focus
on computing the (topological) left hand side.

Let us discuss the left hand side.  Farrell-Jones \cite{FJ1} established the
existence of an equivariant generalized homology theory (denoted
$H_*^?(-; \kz)$), having the property
that for any group $G$, and any integer $n$, the equivariant homology of
a point $*$ with trivial $G$-action satisfies $H_n^G(*; \kz)\cong K_n(\mathbb ZG)$.
Now given any $G$-CW-complex $X$, the obvious $G$-equivariant map
$X\rightarrow *$ induces a canonical homomorphism in equivariant homology:
$$H_n^G(X; \kz) \rightarrow H_n^G(*; \kz)\cong K_n(\mathbb ZG)$$
called the assembly map. The idea behind the isomorphism conjecture
is to find a ``suitable'' space $X$ for the above map to be an
isomorphism.  The space $X$ should be canonically associated to the
group $G$, and should be explicit enough for the left hand side to
be computable.  In the isomorphism conjecture, the space $E_{\vc}G$
that appears is any model for the classifying space for $G$-actions
with isotropy in the family of virtually cyclic subgroups.  We refer
the reader to the survey paper by L\"uck and Reich \cite{LR} for
more details.

Having explained the left hand side of the Farrell-Jones isomorphism
conjecture, let us now return to the groups for which we would like
to do computations.  Recall that we are considering groups $\G _P$,
associated to certain finite volume geodesic polyhedra $P\subset
\mH^3$. These groups are automatically lattices in $O^+(3,1)$.

Now for lattices $\G \leq O^+(3,1)$, Lafont-Ortiz established in
\cite[Cor. 3.3]{LO2} the following formula for the algebraic
$K$-theory:
\begin{equation}
H_n^{\G}(E_{\vc}(\G);\mathbb K\mathbb Z^{-\infty})
\cong H_n^{\G}(E_{\fin}(\G);\mathbb K\mathbb Z^{-\infty})
\oplus \bigoplus _{V\in \mathcal V}H_n^{V}(E_{\fin}(V)\rightarrow*).
\end{equation}
Let us explain the terms showing up in the above formula.  The left
hand side is the homology group we are interested in computing, and
coincides with the algebraic $K$-theory groups $K_n(\mZ \G)$ (as
equation (1) is an isomorphism).  The first term on the right hand
side is the equivariant homology of $E_{\fin}\G$, a model for the
classifying space for proper actions of $G$.  But it is well known
that for lattices in $O^+(3,1)$, such a model is given by the action
on $\mH^3$. For the second term appearing on the right hand side, we
have that:
\begin{itemize}
\item $\mathcal V$ consists of one representative $V$ from each conjugacy class in $\G$
of those infinite subgroups of the form $\stab_{\G}(\gamma)$, where $\gamma$ ranges over
geodesics in $\mH ^3$.
\item the groups $H_n^{V}(E_{\fin}(V)\rightarrow*)$ are the cokernels of the
assembly maps $H_n^{V}(E_{\fin}(V); \kz) \rightarrow H_n^{V}(*; \kz)$.
\end{itemize}
In view of the splitting formula, we merely need to analyze
the two terms appearing on the
right hand side of equation (2). 

The first term is computed via an
Atiyah-Hirzebruch type spectral sequence, which we will 
analyze in Sections 3 and 4. The remaining terms will be 
analyzed in our last Section 5.

%%%%%%%%%%%%%%%%%%%%%%%%%%%%%%%%%%%%%%%%%%%%%%%%%%%

\section{Lower algebraic $K$-theory of $D_n$, $D_n\times \mZ_2$, and $A_5 \times \mZ_2$}

In order to compute the term $H_*^{\G}(E_{\fin}(\G_P);\mathbb K\mathbb Z^{-\infty})$,
we will make use of an Atiyah-Hirzebruch type spectral sequence due to Quinn. The 
computation of the $E^2$-terms of the spectral sequence requires 
knowledge of the lower algebraic $K$-theory of  cell stabilizers for the
$\G_P$-action on $\mH ^3$. For many of the groups arising as cell
stabilizers, the lower algebraic $K$-theory is known (see \cite[Section 5]{LO2}).
The only lower algebraic $K$-groups we still need
to compute are those of dihedral groups $D_n$ (generic stabilizers of 1-cells),  
those of groups of the form $D_n\times \mZ_2$ (generic stabilizers of 0-cells), as well
as the $K_{-1}$ of the group $A_5 \times \mZ _2$.

\vskip 5pt

We recall that Carter \cite{C}
established that $K_n(\mZ G)=0$ for $n\leq -2$ whenever $G$ is a
finite group.  In particular, we will just focus on computing the
$K_{-1},\tilde K_0,Wh$ for the generic stabilizers $D_n$ and $D_n\times \mZ_2$. 
Among these, we provide easily computable number 
theoretic expressions for the $K_{-1}$ (Section 3.1) and for $Wh$ (Section 3.4). 
In contrast, the determination of $\tilde K_0$ is a classical, hard question; we provide 
a summary of what is known (Section 3.3). We also calculate the group
$K_{-1}(\mZ [A_5 \times \mZ _2])$ (Section 3.2).

%%%%%%%%%%%%%%%%%%%%%%%%%%%%%%%%%%%%%%%%%%%%%%%%%%%

\subsection{The negative $K$-theory $K_{-1}(\mathbb ZG)$}
A general recipe for computing the $K_{-1}$ of integral group rings of finite groups is 
provided by Carter \cite{C}. First recall that if $A$ is a simple artinian ring, it is isomorphic to $M_n(D)$ 
for some positive integer $n$ and 
division ring $D$, finite dimensional over its center $E$. That dimension $[D:E]$ is a square, and
the {\it Schur index} of $A$ equals $\sqrt{[D:E]}$. For a field $F$ and a finite group $G$, let $r_F$ 
denote the number of isomorphism classes of simple $FG$-modules. D. W. Carter \cite{C} proved
\[
K_{-1}(\mathbb ZG) \cong \mathbb Z^r \oplus (\mZ_2)^s
\]
where
\begin{equation}
r=1-r_{\mathbb Q} + \sum_{p\,|\, |G|} (r_{\mathbb Q_{p}} -r_{\mathbb F_{p}})
\end{equation}
where, as in all our summations, $p$ is prime; and $s$ is the number of simple components $A$ of $\mathbb QG$ with even Schur index but 
with $A_P$ of odd Schur index for each prime $P$ of the center of $A$ that divides $|G|$. We
now proceed to use Carter's formula to compute the $K_{-1}$ associated to the groups
$D_n, D_n\times \mZ_2$, and $A_5 \times \mZ _2$.

\vskip 10pt

Let us first consider the case of the groups $D_n$ and $D_n\times \mZ _2$. If $H$ is a subgroup of a group $G$, denote its index in $G$ by $[G : H].$
Suppose that $n$ is an integer exceeding 2, $\delta(n)$ is the number of (positive) divisors 
of $n$, and for each prime $p$, write $n=p^{\nu}\mu$ where $\mu \notin p\mathbb Z$. So $\nu = \nu_p(n)$
is the power to which $p$ divides $n$, and $\mu=\mu_p(n)$ is the non-$p$-part of $n$. 
Define
$$\sigma_p(n) =\sum_{d\mid \mu} [\,\mathbb Z^{\ast}_d : \langle  - \bar{1}, \bar{p} \rangle\, ] = \sum_{p \nmid d \mid n}  [\,\mathbb Z^{\ast}_d : \langle  - \bar{1}, \bar{p} \rangle\, ] $$
$$\tau(n) =\sum_{p \mid 2n} \nu_p(n)\sigma_p(n) = \sum_{p \mid n} \nu_p(n) \sigma_p(n)$$
We are now ready to state our:

\begin{Thm}
If $D_n$ is the dihedral group of order $2n$ and $\mZ_2$ is the cyclic group of order 2, then

\begin{enumerate}
\item $K_{-1}(\mathbb ZD_n) \cong \mathbb Z^{1-\delta(n) +\tau(n)},$

\vspace{.2cm}
\item $K_{-1}(\mathbb Z[D_n \times \mZ_2]) \cong \mathbb Z^{1-2\delta(n) + \sigma_2(n)+2\tau(n)}.$

\end{enumerate}
\end{Thm}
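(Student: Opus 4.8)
The plan is to apply Carter's formula, reducing the problem to two tasks: evaluating the invariants $r_{\mQ}$, $r_{\mQ_p}$, $r_{\mF_p}$ for every prime dividing the group order, and showing that the $2$-torsion exponent $s$ vanishes. Since $K_{-1}(\mZ G)\cong\mZ^r\oplus(\mZ_2)^s$, once $s=0$ the only content is the rank $r=1-r_{\mQ}+\sum_{p\mid|G|}(r_{\mQ_p}-r_{\mF_p})$.

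First I would record the rational Wedderburn decomposition. Because $r$ and $r^{-1}$ are conjugate in $D_n$ (via $s$), each faithful two-dimensional complex irreducible attached to a divisor $d\mid n$ with $d\geq 3$ is orthogonal, with character field the real cyclotomic field $\mQ(\zeta_d)^{+}=\mQ(\zeta_d+\zeta_d^{-1})$ and Schur index $1$; hence its simple component is $M_2(\mQ(\zeta_d)^{+})$. Adjoining the $a$ one-dimensional components ($a=2$ if $n$ is odd, $a=4$ if $n$ is even) gives
\[
\mQ D_n\cong\mQ^{\,a}\times\!\!\prod_{\substack{d\mid n\\ d\geq 3}}\!\! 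M_2\bigl(\mQ(\zeta_d)^{+}\bigr),
\qquad
\mQ[D_n\times\mZ_2]\cong \mQ D_n\times\mQ D_n,
\]
the second identity because $\mQ\mZ_2\cong\mQ\times\mQ$. Every simple component is a matrix ring over a field, so all Schur indices equal $1$; thus $s=0$ for both groups and $K_{-1}$ is free abelian. This also yields $r_{\mQ}(D_n)=\delta(n)+c_n$, where the surplus $c_n=1$ ($n$ odd) or $2$ ($n$ even) counts the reflection (one-dimensional) components.

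Next I would compute the local counts by sorting the simple modules (via $F$-conjugacy classes, equivalently the local splitting of the components) into \emph{rotations} and \emph{reflections}. The arithmetic hinge is that for $p\nmid d$ the index $[\,\mZ^{\ast}_d:\langle-\bar 1,\bar p\rangle\,]$ is simultaneously the number of $F$-classes among the order-$d$ rotations (inversion $-\bar1$ for $D_n$-conjugacy, Frobenius $\bar p$ for the field) and the number of primes of $\mQ(\zeta_d)^{+}$ above $p$. For a rotation of order $d=p^{\beta}e$ ($p\nmid e$), over $\mQ_p$ the totally ramified $p$-part is absorbed and the order-$d$ rotations contribute $[\,\mZ^{\ast}_e:\langle-\bar 1,\bar p\rangle\,]$ classes, so summing over $0\le\beta\le\nu_p(n)$ the rotations give $(\nu_p(n)+1)\sigma_p(n)$; over $\mF_p$ only the $p$-regular rotations survive, giving exactly $\sigma_p(n)$. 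The reflections all have order $2$, hence are fixed by every field action, contributing the surplus $c_n$ to $r_{\mQ}$ and to each $r_{\mQ_p}$, but contributing to $r_{\mF_p}$ only for odd $p$. Consequently they cancel between $r_{\mQ_p}$ and $r_{\mF_p}$ for odd $p$ but not at $p=2$, giving $r_{\mQ_p}-r_{\mF_p}=\nu_p(n)\sigma_p(n)$ for odd $p$ and $r_{\mQ_2}-r_{\mF_2}=\nu_2(n)\sigma_2(n)+c_n$.

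Finally I would substitute into Carter's rank. For $D_n$ the surplus $c_n$ in $r_{\mQ}$ cancels the reflection term $c_n$ carried by the $p=2$ difference, and the rotation parts telescope to $r=1-\delta(n)+\sum_{p\mid n}\nu_p(n)\sigma_p(n)=1-\delta(n)+\tau(n)$, proving (1). For $D_n\times\mZ_2$ the splitting $\mQ[D_n\times\mZ_2]\cong(\mQ D_n)^2$ doubles every characteristic-zero count, but at $p=2$ the ring $\mF_2\mZ_2=\mF_2[x]/(x-1)^2$ is local rather than semisimple, so $r_{\mF_2}$ is \emph{not} doubled: $r_{\mF_2}(D_n\times\mZ_2)=r_{\mF_2}(D_n)=\sigma_2(n)$. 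Tracking this $-\sigma_2(n)$ (versus $-2\sigma_2(n)$) against the otherwise doubled terms leaves precisely one surviving $\sigma_2(n)$ and yields $r=1-2\delta(n)+\sigma_2(n)+2\tau(n)$, proving (2). The main obstacle throughout is the bookkeeping at $p=2$: the ramification of the $2$-part of a rotation's order inside $r_{\mQ_2}$, and above all the failure of semisimplicity of $\mF_2\mZ_2$, which is exactly what breaks the naive ``double everything'' and produces the asymmetric $\sigma_2(n)$ summand.
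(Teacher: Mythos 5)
Your proposal is correct and follows essentially the same route as the paper's proof: Carter's formula, the rational Wedderburn decompositions (matrix rings over fields) forcing $s=0$ and giving $r_{\mQ}=\delta(n)+\epsilon$, and Berman-style counting of $F$-conjugacy classes of $p$-regular elements to obtain $r_{\mQ_p}=(\nu_p(n)+1)\sigma_p(n)+\epsilon$ and $r_{\mF_p}$, with the same decisive asymmetry at $p=2$. The only cosmetic difference is that you justify $r_{\mF_2}(D_n\times\mZ_2)=r_{\mF_2}(D_n)=\sigma_2(n)$ via the failure of semisimplicity (locality) of $\mF_2[\mZ_2]$, whereas the paper gets the same equality by noting that the only $2$-regular elements of $D_n\times\mZ_2$ are the odd-order rotations $(a^i,1)$; these observations are equivalent.
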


\begin{proof}

For the groups $G=D_n$ and $G=D_n \times \mZ_2$, we know 
that the simple components of $\mathbb QG$ are matrix 
rings over fields (see below); so each has Schur index 1, which forces $s=0$. This gives us the

\vskip 5pt

\noindent {\bf Fact 1:} For 
the groups $G=D_n$ and $G= D_n \times \mZ_2$, the group $K_{-1}(\mZ G)$ is torsion-free.

\vskip 5pt

So for these groups, we are left with having to compute the 
quantities $r_F$ where $F$ are various fields. Now for $F$ a field of characteristic 0, 
$FG$ is semisimple, and $r_F$ coincides with the number of simple components of $FG$ in its 
Wedderburn decomposition.

Let us denote by $\epsilon$ the number of conjugacy classes of reflections in $D_n$; so $\epsilon$ 
is 1 or 2,  according to whether $n$ is odd or even. As shown by Magurn \cite{Ma1},
\[
\mathbb QD_n \cong \bigoplus_{d\,|\,n,\, d>2} M_2(\mathbb Q (\zeta_d + \zeta^{-1}_d)) \oplus \mathbb Q^{2\epsilon};
\]
so $r_{\mathbb Q}=\delta(n)+\epsilon$. On the other hand, for the group $D_n \times \mZ_2$, we
know that $\mQ [D_n \times \mZ_2] \cong \mQ D_n \oplus \mQ D_n$, which immediately tells us that
for these groups, $r_{\mathbb Q}=2 \delta(n)+ 2 \epsilon$. We summarize these observations in our

\vskip 5pt

\noindent {\bf Fact 2:} For the groups $G=D_n$, we have that $r_{\mathbb Q}=\delta(n)+\epsilon$.
For the groups $G=D_n \times \mZ_2$, we have that $r_{\mathbb Q}= 2 \delta(n)+2 \epsilon$.

\vskip 5pt

To count simple $FG$-modules for an arbitrary field $F$ of characteristic $p$ (possibly $p=0$), 
we employ a theorem of S.\ D.\ Berman. Suppose $d$ is a positive integer and $d\neq0$ in $F$. 
Then $x^d-1$ has $d$ different roots in the algebraic closure $\bar{F}$, and these form a (necessarily 
cyclic) subgroup of $\bar{F}^{\ast}$. Say $\zeta_d$ is a generator -- a primitive $d^{\text{th}}$ root of 
unity over $F$. Now $F_d=F(\zeta_d)$ is a Galois extension of $F$, and each member of the Galois 
group $\at(F_d/F)$ is defined by its effect $\zeta_d \mapsto \zeta^{t}_d$, where $t$ is coprime to $d$. 
Sending such an automorphism to $\bar{t} \in \mathbb Z_d^{\ast}$ defines an embedding of the Galois 
group as a subgroup $T_d$ of $\mathbb Z^{\ast}_d$

With $p$ as above, an element $x \in G$ is $p$-{\sl regular} if $p$ does not divide the order of $x$. 
Suppose $m$ is the least common multiple of the orders of all $p$-regular elements of $G$. Then 
$m \neq0$ in $F$. Say $p$-regular elements $x,  y \in G$ are $F$-{\sl conjugate} if $x^t=gyg^{-1}$ 
for some $\bar{t} \in T_m$ and $g \in G$. This is an equivalence relation on the set of $p$-regular 
elements of $G$. Notice that $F$-conjugate $p$-regular elements have equal order, since each $t$ 
is coprime to $m$, hence to their orders. Berman established (see \cite[Thms. (21.5) and
(21.25)]{CR1}) that for $F$ is field of characteristic $p$ (possibly $p=0$) and $G$ a finite group, the number 
$r_F$ of isomorphism classes of simple $FG$-modules is equal to the number of $F$-conjugacy 
classes of $p$-regular elements of $G$. In view of this result and Carter's formula (3), we will now focus on counting the
$F$-conjugacy classes in the groups $D_n$ and $D_n \times \mZ_2$, where $F$ is either
a $p$-adic field $\mQ _p$ or a finite field $\mathbb F _p$. We first work over the field $\mQ _p$, 
and establish

\vskip 10pt

\noindent {\bf Fact 3:} For the groups $G=D_n$, we have that $r_{\mathbb Q_p}=(\nu_p(n) +1) \sigma_p(n) + \epsilon$.
For the groups $G=D_n \times \mZ_2$, we have that $r_{\mathbb Q_p}= 2(\nu_p(n) +1) \sigma_p(n) + 2\epsilon$.

\vskip 5pt

To see this, we first recall that for a prime $p$, the field $\mathbb Q_p$ of 
$p$-adic numbers has characteristic 0. Every element of 
a finite group $G$ is 0-regular, and hence the least common multiple $m$ of the orders of 
0-regular elements coincides with the minimum exponent of $G$. We now set $F=\mathbb Q_p$, 
and for each $m$, denote by $F_m=\mathbb Q_p(\zeta_m)$. Write 
$m=p^eq$ with $e \geq 0$ and $q$ an integer not divisible by $p$.

As shown by \cite[Chapter IV, Section 4]{Se1}, $\at(F_{p^e}/F)$ embeds as all $\mathbb Z^{\ast}_{p^e}$ 
and $\at(F_q/F)$ embeds as the cyclic subgroup $\langle \bar{p}\rangle$ in $\mathbb Z^{\ast}_{q}$. The 
former is deduced from 
$$[F_{p^e}:F]=\phi(p^e)=|\mathbb Z^{\ast}_{p^e}|,$$
and this, in turn, follows from the irreducibility in $F[x]$ of the cyclotomic polynomial 
$$p(x)=1+x^{p^{e-1}} + x^{2p^{e-1}}+ \cdots + x^{(p-1)p^{e-1}}.$$
This irreducibility comes from that of the Eisenstein polynomial $p(x+1)$ in $F[x]$. Now $F_q$ is an unramified 
extension of $F$, so $p$ remains prime in its valuation ring and $p(x+1)$ is still an Eisenstein polynomial in 
$F_q[x]$. So $p(x)$ is irreducible there, and 
$$[F_m : F_q]= [F_{q}(\zeta_{p^e}):F_q]=\phi(p^e).$$
Therefore $[F_m: F]=[F_{p^e}: F][F_q:F].$

Now $\zeta_{p^e} \zeta_q$ has order $m$. So we can choose $\zeta_m$ to be  $\zeta_{p^e} \zeta_q$, 
and each element of  $\at(F_m/F)$ is uniquely determined by its restrictions to $F_{p^e}$ and $F_q$. 
The resulting embedding
$$\at(F_m/F) \longrightarrow \at(F_{p^e}/F) \times \at(F_{q}/F)$$
must be an isomorphism, since the domain and codomain have equal finite size. Note also that, for 
$d$ dividing $m$, the restriction map
$$\at(F_m/F) \longrightarrow \at(F_{d}/F)$$
is surjective by the Extension Theorem of Galois theory. So the canonical map $\mathbb Z_m \rightarrow 
\mathbb Z_d$ takes $T_m$ onto $T_d$.

\vskip 5pt

Next, let us specialize to $G=D_n$, with $F$ still the $p$-adic field $\mathbb Q_p$. Then $m$ is the least common 
multiple of $2$ and $n$. For $\bar{t} \in T_m \leq \mathbb Z^{\ast}_m$, $t$ is coprime to $m$, so must be odd. 
This implies that for each reflection $b \in D_n$, $b^t=b$, telling us that the $\mathbb Q_p$-conjugacy class of 
$b$ coincides with its ordinary conjugacy class. Hence there are $\epsilon$ distinct $\mQ _p$-conjugacy
classes of reflections in $D_n$.

Now let us consider rotations in $D_n$. Each rotation $x \in D_n$ has order dividing $n=p^{\nu} \mu$, and
hence has order $p^id$ with 
$i \leq \nu$ and $d$ dividing $\mu$. Every rotation of order $p^id$ is uniquely expressible as a product 
$yz$ where $y$ and $z$ are rotations of orders $p^i$, $d$ respectively. If $x$ has this decomposition $yz$, 
then as $\bar{t}$ varies in $T_m$, $x^t=y^tz^t=y^jz^k$ where $(\bar{j}, \bar{k})$ runs through all pairs in 
$T_{p^i} \times T_d$. Since rotations $x \in D_n$ have conjugacy class $\{x, x^{-1}\}$, the 
$\mathbb Q_p-$conjugacy class of a rotation $x$ of order $p^id$ is the set of
$$|T_{p^i} \times T_d| = |\mathbb Z^{\ast}_{p^i} \times \langle \bar{p} \rangle| = \phi(p^i)| \langle \bar{p} \rangle|$$ 
elements $x^t$, together with their inverses.

If $\langle \bar{p} \rangle = \langle -\bar{1}, \bar{p} \rangle$ in $\mathbb Z^{\ast}_d$, each $-\bar{p}^u$ is $p^v$ 
for some integer $v$, and the set of powers
$$x^t=y^u z^{p^u}$$
is closed under inverses. Or if $\langle \bar{p} \rangle \neq \langle -\bar{1}, \bar{p} \rangle$, then $-\bar{p}^u \notin  
\langle \bar{p} \rangle$, and the set of powers $x^t$ does not overlap the set of $x^{-t}$. Either way the number of 
$\mathbb Q_p$-conjugacy classes of $x$ is $\phi(p^i)| \langle -\bar{1}, \bar{p} \rangle|$. There are $\phi(p^i) \phi(d)$ 
rotations of order $p^id$; so the number of $\mathbb Q_p$-conjugacy classes of them is the index
\[
\dfrac{\phi(d)}{| \langle -\bar{1}, \bar{p} \rangle|}=[\mathbb Z^{\ast}_d :  \langle -\bar{1}, \bar{p} \rangle].
\]
This number is independent of $i$. Since there are $\nu+1$ powers $p^i$, we obtain
\[
r_{\mathbb Q_p}= (\nu +1) \sum_{d\mid \mu}^{} [\mathbb Z^{\ast}_d :  \langle -\bar{1}, \bar{p} \rangle]+ \epsilon
=(\nu +1) \sigma_p(n) + \epsilon
\]
Finally, we observe that we have an isomorphism $\mQ _p[D_n \times \mZ_2] \cong \mQ_pD_n \oplus
\mQ_p D_n$. Recalling that the integer $r_{\mQ _p}$ can also be interpreted as the number of 
simple components in the Wedderburn decomposition of $\mQ _p G$, this tells us that for the group
$D_n \times \mZ_2$, we have $r_{\mQ _p} = 2(\nu_p(n) +1) \sigma_p(n) + 2\epsilon$. This concludes the
verification of Fact 3. 

\vskip 10pt

Finally, we work over the finite fields $\mF_p$, and establish

\vskip 5pt

\noindent {\bf Fact 4:} For the groups $G=D_n$, we have that $r_{\mF_p}= \sigma_p(n) + \epsilon$ ($p$ odd) 
and $r_{\mF_2} = \sigma_2(n)$.
For the groups $G=D_n \times \mZ_2$, we have that $r_{\mF_p}= 2 \sigma_p(n) + 2\epsilon$ ($p$ odd) 
and $r_{\mF_2} = \sigma_2(n)$.

\vskip 5pt

Let us first consider the case $G= D_n$. Set $F= \mathbb F_p$, and let $m$ be the least common multiple 
of the orders of the 
$p$-regular elements in $G$. The fields $F$, $F(\zeta_m)$ are finite and $\at(F(\zeta_m)/F)$ is cyclic, 
generated by the $p$-power map, since $|F|=p$. So $T_m =\langle \bar{p} \rangle \leq \mathbb Z^{\ast}_m$. 
Fix a reference rotation $a \in D_n$ of order $n$. 

The $p$-regular rotations are the rotations of order $d$ dividing $\mu_p(n)$, and so are of the form $\alpha^u$, where 
$\alpha=a^{n/d}$ and $u$ is coprime to $d$. Then $\alpha^u$, $\alpha^v$ are $\mathbb F_p$-conjugate if 
and only if
$$\alpha^{up^i} = \alpha^{v(-1)^j}$$
for some positive integers $i$, $j$. This just means $\bar{u} \equiv \bar{v}$ mod  $\langle -\bar{1}, \bar{p} \rangle$ 
in $\mathbb Z^{\ast}_d$. So the number of $\mathbb F_p$-conjugacy classes of $p$-regular rotations in $D_n$ is
\[
\sum_{d\mid \mu}  [\mathbb Z^{\ast}_d :  \langle -\bar{1}, \bar{p} \rangle]= \sigma_p(n).\]
Next let us consider reflections in $D_n$. If $p$ is an odd prime, reflections are also $p$-regular. Since $m$ is 
even, each $t$ with $\bar{t} \in T_m \leq \mathbb Z^{\ast}_m$ is odd. So we see that for reflections, 
$\mF_p$-conjugacy coincides with ordinary conjugacy. In particular, for odd primes, we see that
the number of $\mF _p$-conjugacy classes of reflections is $\epsilon$. On the other hand, 
reflections are not 2-regular. Putting this together, we obtain that
\[
r_{\mathbb F_p}= \sigma_p(n) +\epsilon \hskip 10pt (p \;\; \text{odd}), \;\; \text{and}\;\; r_{\mathbb F_2}= \sigma_2(n).
\]
which establishes the first part of Fact 4.

\vskip 5pt

Now let us consider groups of the form $G=D_n\times \mZ_2$, where $\mZ_2$ is the cyclic group $\{1,c\}$ generated
by $c$ or order $2$. 
In the group $D_n\times \mZ_2$, the subgroup $D_n \times \{1\}$ is a copy of $D_n$. Fix a reference rotation 
$a\in D_n$ of order $n$ and a reference reflection
$b\in D_n$. Then in $D_n\times \mZ_2$ we have 
rotations $(a^i, 1)$, reflections $(a^ib, 1)$, 
co-rotations $(a^i,c)$  and co-reflections $(a^ib,c)$. For $x \in D_n$, the conjugates in $D_n \times \mZ_2$ of 
$(x,1)$ (resp. $(x, c)$) are the elements $(y, 1)$ (resp. $(y, c)$) with $x$ conjugate to $y$ in $D_n$. The least 
common multiple $m$ of orders of $p$-regular elements is the same for $D_n$ and $D_n \times \mZ_2$; so the 
exponents $t$ with $\bar{t} \in T_m$ are the same for both these groups.

For $p$ odd, we have that $m$ is even, hence $t$ are odd, and $(x,1)^t=(x^t, 1)$, while $(x, c)^t=(x^t, c)$. 
The $p$-regular elements in $D_n\times \mZ_2$ are the reflections, the co-reflections, and the $p$-regular 
rotations and co-rotations. So for $p$ odd, the number of 
$\mathbb F_p$-conjugacy classes in $D_n \times \mZ_2$ is double the number of $\mF _p$-conjugacy 
classes in the corresponding $D_n$:
$$r_{\mathbb F_p} = 2 \sigma_p(n) + 2 \epsilon  \hskip 10pt (p \;\; \text{odd}).$$
On the other hand, the only 2-regular elements in $D_n \times \mZ_2$ are rotations. This implies that the 
$\mathbb F_2$-conjugacy classes are the same for $D_n \times \mZ_2$ as for $D_n$, giving 
us $r_{\mathbb F_2} =  \sigma_2(n)$. This completes the verification of the second statement in
Fact 4.

\vskip 10pt

Finally, let us apply Carter's formula to complete the proof of our theorem. For
groups of the form $G=D_n$, we have from Fact 1 that $K_{-1}(\mZ D_n)$ is free abelian. Furthermore,
the rank of $K_{-1}(\mZ D_n)$ is given by equation (3). Combining Fact 3 and Fact 4, we see that
\[
r_{\mathbb Q_p} - r_{\mathbb F_p}=
\begin{cases}
\nu_p(n) \sigma_p(n), & p\neq 2 \\
\nu_2(n) \sigma_2(n) + \epsilon, & p=2.
\end{cases}
\]
Summing over all primes dividing $|G|=2n$, we see that:
\[
\sum_{p\mid 2n} (r_{\mathbb Q_p} - r_{\mathbb F_p})= \bigg( \sum _{p\mid 2n}\nu_p(n) \sigma_p(n) \bigg) 
+ \epsilon = \tau(n) + \epsilon.
\]
Substituting this expression into equation (3), and substituting in the calculation of $r_{\mQ}$ from 
our Fact 2, we see that the rank of $K_{-1}(\mZ D_n)$ is given by:
\[
r= 1 - (\delta(n) +\epsilon)-(\tau(n) + \epsilon) = 1 -\delta(n) + \tau(n),
\]
which establishes part (1) of our Theorem 1.

\vskip 10pt

Similarly, for groups of the form $G= D_n \times \mZ_2$, we again have from Fact 1 that 
$K_{-1}(\mZ D_n)$ is free abelian. Combining Fact 3 and Fact 4, we see that for these groups
\[
r_{\mathbb Q_p} - r_{\mathbb F_p}=
\begin{cases}
2 \nu_p(n) \sigma_p(n), & p\neq 2 \\
2 \nu_2(n) \sigma_2(n) + \sigma_2(n) + 2\epsilon, & p=2.
\end{cases}
\]
Summing over all primes dividing $|G|=4n$ (which coincides with the primes dividing $2n$), we see that:
\[
\sum_{p\mid 4n} (r_{\mathbb Q_p} - r_{\mathbb F_p})= 2 \bigg( \sum _{p\mid 2n}\nu_p(n) \sigma_p(n) \bigg) + \sigma_2(n) +  2 \epsilon 
= 2\tau(n) + \sigma_2(n) + 2 \epsilon.
\]
Finally, substituting these into equation (3), and substituting the calculation of $r_{\mQ}$ from 
our Fact 2, we obtain that $K_{-1}(\mathbb Z[D_n \times \mZ_2])$ has rank
$$r=1-(2\delta(n) + 2\epsilon) + (2 \tau(n) + \sigma_2(n) + 2\epsilon) = 1- 2\delta(n) + \sigma_2(n) +2\tau(n),$$ 
establishing part (2) of our theorem. This concludes the proof of Theorem 1.
\end{proof}

As an application of this result, we can easily determine the $K_{-1}$ of dihedral groups $D_n$ when
$n$ has few divisors. For example, we have:

\begin{corollary}
For $p$ an odd prime, set $r=[\mathbb Z^{\ast}_p :  \langle -\bar{1}, \bar{2} \rangle]$. Then
we have 
\begin{itemize}
\item $K_{-1}(\mathbb ZD_{2p}) \cong K_{-1}(\mathbb Z[D_p \times\mZ_2]) \cong \mathbb Z^r$,
\item $K_{-1}(\mathbb Z[D_{2p}\times \mZ_2]) \cong \mathbb Z^{3r}$.
\end{itemize}
\end{corollary}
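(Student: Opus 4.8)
The plan is to apply Theorem 1 directly, specializing the rank formulas to $n=2p$ (for the groups $D_{2p}$ and $D_{2p}\times\mZ_2$) and to $n=p$ (for $D_p\times\mZ_2$). Since $p$ is an odd prime, the only work involved is a careful enumeration of the divisors of $n$ together with the evaluation of the building blocks $\delta(n)$, $\sigma_2(n)$, $\sigma_p(n)$, and $\tau(n)$ appearing in Theorem 1.

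First I would record the two elementary observations that trivialize the bookkeeping: for $d\in\{1,2\}$ the group $\mZ^{\ast}_d$ is trivial, so every index $[\,\mZ^{\ast}_d:\langle-\bar 1,\bar p\rangle\,]$ equals $1$; and for $d=p$ the index $[\,\mZ^{\ast}_p:\langle-\bar 1,\bar 2\rangle\,]$ is by definition the quantity $r$ named in the statement. These are the only indices that occur below.

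Next I would compute the invariants for $n=2p$. The divisors of $2p$ are $1,2,p,2p$, so $\delta(2p)=4$. Its odd divisors are $1$ and $p$, giving $\sigma_2(2p)=1+r$, while its divisors coprime to $p$ are $1$ and $2$, giving $\sigma_p(2p)=2$. Since $\nu_2(2p)=\nu_p(2p)=1$, we obtain $\tau(2p)=(1+r)+2=3+r$. Substituting into part (1) of Theorem 1 gives rank $1-4+(3+r)=r$, so $K_{-1}(\mZ D_{2p})\cong\mZ^{r}$; substituting into part (2) gives rank $1-8+(1+r)+2(3+r)=3r$, which yields the last assertion $K_{-1}(\mZ[D_{2p}\times\mZ_2])\cong\mZ^{3r}$.

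Finally I would treat $n=p$ for the group $D_p\times\mZ_2$. Here $\delta(p)=2$, the odd divisors $1,p$ give $\sigma_2(p)=1+r$, the divisors coprime to $p$ reduce to $d=1$ so $\sigma_p(p)=1$, and $\tau(p)=1$. Part (2) of Theorem 1 then gives rank $1-4+(1+r)+2=r$, so $K_{-1}(\mZ[D_p\times\mZ_2])\cong\mZ^{r}$, matching the value found for $D_{2p}$. There is no genuine obstacle in this argument; the one point demanding care is the correct identification of which divisors are odd and which are coprime to $p$, since the same integer $n=2p$ feeds into both $\sigma_2$ and $\sigma_p$ under different divisibility restrictions.
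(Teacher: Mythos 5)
Your proposal is correct and follows essentially the same route as the paper: specialize the rank formulas of Theorem 1 and evaluate $\delta$, $\sigma_2$, $\sigma_p$, $\tau$ at $n=2p$ (and all of your numerical evaluations check out). The only cosmetic difference is that you verify $K_{-1}(\mathbb Z[D_p\times\mZ_2])\cong\mZ^r$ by a separate application of part (2) of Theorem 1 with $n=p$, whereas the paper leaves that middle isomorphism implicit (it follows at once from $D_p\times\mZ_2\cong D_{2p}$ for $p$ odd); your explicit computation is a perfectly fine, slightly more self-contained substitute.
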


\begin{proof}
From our theorem, we have that the rank of $K_{-1}(\mathbb ZD_{2p})$ is given by:
$$1-\delta (2p) + \tau (2p).$$ 
We have that $\delta(2p)=4$, while $\tau(2p)=
\nu_2(2p) \sigma_{2}(2p) + \nu_p(2p) \sigma _{p}(2p)$. Since $\nu_2(2p)=\nu_p(2p)=1$, we are left
with computing the integers $\sigma_{2}(2p), \sigma _{p}(2p)$. But it is easy to
verify that $\sigma _{p}(2p)=2$, while $\sigma _{2}(2p)= 1+ [\mathbb Z^{\ast}_p :  
\langle -\bar{1}, \bar{2} \rangle]=1+r$. This gives us $\tau(2p)=3+r$; 
substituting in these values gives the first
statement in our corollary.

Similarly, we know that the rank of $K_{-1}(\mathbb Z[D_{2p}\times \mZ_2])$ is
given by:
$$1- 2 \delta(2p) + \sigma_{2}(2p)+2\tau(2p).$$
Substituting in the values computed above, we find that the rank is given by
$$1 - 2(4) + (1+r) + 2(3+r) = 3r$$
which establishes the second statement in the corollary.
\end{proof}

For a concrete example, we see for instance that when $p=3$, $r=1$, then
$K_{-1}(\mathbb ZD_6) \cong \mathbb Z$ and $K_{-1}(\mathbb Z[D_6 \times \mZ_2]) 
\cong \mathbb Z^3$ (see \cite[Section 5.1]{LO2} for an alternate computation of this group). 

As another application, we can completely classify dihedral groups 
(and products of dihedral groups with $\mZ_2$) whose $K_{-1}$ vanishes:

\begin{corollary}
$K_{-1}(\mathbb ZD_n)=0$ if and only if $n$ is a prime power, and 
$K_{-1}(\mathbb Z[D_n \times \mZ_2])=0$ if and only if $n$ is a power of 2.
\end{corollary}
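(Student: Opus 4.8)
The plan is to read off both directions of each equivalence directly from the rank formulas of Theorem 1, exploiting the fact that the exponents appearing there are genuine ranks of free abelian groups and are therefore automatically nonnegative. Thus $K_{-1}(\mZ D_n)=0$ holds exactly when $1-\delta(n)+\tau(n)=0$, i.e. when $\tau(n)=\delta(n)-1$, and $K_{-1}(\mZ[D_n\times\mZ_2])=0$ holds exactly when $1-2\delta(n)+\sigma_2(n)+2\tau(n)=0$. The entire argument then reduces to locating the $n$ for which these exponents vanish, and the one elementary input I would isolate first is the lower bound
\[
\sigma_p(n)\ \geq\ \delta(\mu_p(n)),
\]
valid for every prime $p$, which holds simply because $\sigma_p(n)$ is a sum of $\delta(\mu_p(n))$ indices $[\,\mZ_d^{\ast}:\langle-\bar{1},\bar{p}\rangle\,]$, each of which is a positive integer.

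For the two ``if'' directions I would just substitute. If $n=p^k$ is a prime power, then $p$ is the only prime dividing $n$ and $\mu_p(n)=1$, so $\sigma_p(n)=1$ and $\tau(n)=\nu_p(n)\sigma_p(n)=k=\delta(n)-1$, giving $K_{-1}(\mZ D_n)=0$. If moreover $n=2^k$ is a power of $2$, then additionally $\sigma_2(2^k)=1$, whence $1-2\delta(2^k)+\sigma_2(2^k)+2\tau(2^k)=1-2(k+1)+1+2k=0$, so $K_{-1}(\mZ[D_{2^k}\times\mZ_2])=0$.

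For the ``only if'' directions I would feed the bound above into the prime factorization $n=\prod_i p_i^{a_i}$. Using $\sigma_{p_i}(n)\geq\delta(\mu_{p_i}(n))=\delta(n)/(a_i+1)$ together with $\nu_{p_i}(n)=a_i$ yields
\[
\tau(n)\ \geq\ \delta(n)\sum_i \frac{a_i}{a_i+1}.
\]
When $n$ has at least two distinct prime factors, this sum has at least two terms, each $\geq 1/2$, hence is $\geq 1$, forcing $\tau(n)\geq\delta(n)>\delta(n)-1$ and so $K_{-1}(\mZ D_n)\neq 0$; this settles the first equivalence. For the second I would split into cases. If $n$ is odd then $\mu_2(n)=n$, so $\sigma_2(n)\geq\delta(n)$, and the rank is $\geq 1-\delta(n)+2\tau(n)$, which is positive both when $n$ is an odd prime power ($\tau(n)=k$, $\delta(n)=k+1$) and when $n$ has two odd prime factors ($\tau(n)\geq\delta(n)$). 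If $n$ is even but not a power of $2$, then $n$ carries at least two distinct primes, so $\tau(n)\geq\delta(n)$, while $\sigma_2(n)\geq\delta(\mu_2(n))\geq 2$, making the rank $\geq 1+\delta(\mu_2(n))>0$.

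The computations themselves are routine; the only genuine content is the lower bound on $\sigma_p(n)$ and the observation that the threshold $\sum_i a_i/(a_i+1)\geq 1$ is crossed precisely when $n$ acquires a second distinct prime factor. The main obstacle, such as it is, is organizing the case analysis for $D_n\times\mZ_2$ so that the extra $\sigma_2(n)$ term is bounded appropriately in both the odd and even cases; once the uniform inequality $\tau(n)\geq\delta(n)$ for non-prime-powers is in hand, each case collapses to a one-line positivity check.
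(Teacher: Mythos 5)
Your proposal is correct and follows essentially the same route as the paper's own proof: the same key lower bound $\sigma_p(n)\geq\delta(\mu_p(n))=\delta(n)/(1+\nu_p(n))$, the same resulting estimate $\tau(n)\geq\delta(n)\sum_p \nu_p(n)/(1+\nu_p(n))\geq\delta(n)\,g/2$ to rule out non-prime-powers, and the same direct substitutions showing the rank vanishes for $n=p^e$ (resp.\ $n=2^e$). The only difference is cosmetic bookkeeping in the $D_n\times\mZ_2$ case (you split by parity of $n$, whereas the paper treats all $g\geq 2$ uniformly and then handles odd prime powers via the bound $\sigma_2(p^e)\geq e+1$), but the underlying inequalities and conclusions coincide.
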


\begin{proof}
Note that 
\[
\sigma_p(n) = \sum_{d\mid \mu} [\mathbb Z^{\ast}_d :  \langle -\bar{1}, \bar{p} \rangle] \geq  \sum_{d\mid \mu} 1 
= \delta(\mu)= \dfrac{\delta(n)}{\delta(p^{\nu})}=\dfrac{\delta(n)}{1+ \nu}.\]
So
\[
\tau(n) \geq  \sum_{p\mid n} \nu_p(n) \bigg( \dfrac{\delta(n)}{1+ \nu_p(n)}\bigg)= \delta(n)  
\sum_{p\mid n}\bigg(1 - \dfrac{1}{1+ \nu_p(n)}\bigg) 
\geq \delta(n) \sum_{p\mid n} \dfrac{1}{2} = \delta(n)\dfrac{g}{2},
\]
where $g$ is the number of primes that divide $n$. Then the rank of $K_{-1}(\mathbb ZD_n)$ is
\[
1 - \delta(n) + \tau(n) \geq 1 + \delta(n)\Big(\dfrac{g}{2} -1\Big).
\]
So $K_{-1}(\mathbb ZD_n)$ does not vanish if $n$ is not a prime power. If $n=p^e$ for $p$ 
prime and $e \geq 1$, then $\tau(n)= \nu_p(n) \sigma_p(n)= (e)(1) =e$ and $\delta(n)=e+1$, 
which gives us $K_{-1}(\mathbb ZD_{p^e})=0$. 

\vskip 5pt

Next we consider the case of groups $D_n \times \mZ_2$. For arbitrary $n$,
\[
\sigma_2(n) + 2 \tau(n) \geq \delta(n) \bigg[ \dfrac{1}{1+ \nu_2(n)} +g\bigg] > \delta(n) g.
\]
This allows us to estimate from below the rank of $K_{-1}(\mathbb Z[D_n \times \mZ_2])=$
$$1 - 2\delta(n) + \sigma_2(n) + 2 \tau(n) > 1+ \delta(n) (g-2).$$
So $K_{-1}(\mathbb Z[D_n \times \mZ_2])$ doesn't vanish if $n$ is not a prime power. If $n=p^e$ 
for $p$ an odd prime and $e \geq 1$, its rank is given by
$$1- 2(e+1) + \sigma_2(n) + 2e = \sigma_2(n)-1.$$
But for such an $n$, $\nu_2(n)=0$ and $\delta (n)=1+e$, giving us
$$\sigma_2(n) \geq \dfrac{\delta(n)}{1+\nu_2(n)} = e+1 \geq 2.$$
This implies that $K_{-1}(\mathbb Z[D_{p^e} \times \mZ_2])$ does not vanish. 
However, if $n=2^e$ with $e \geq 1$, the rank is
$$1- 2(e+1) +1 +2e=0$$
concluding the proof of Corollary 3.
\end{proof}

%%%%%%%%%%%%%%%%%%%%%%%%%%%%%%%%%%%%%%%%%%%%%%%%%

\subsection{The lower $K$-group $K_{-1}(\mathbb Z[A_5\times \mZ_2])$.}

Recall that the group $A_5\times \mZ_2$ is one of the three ``exceptional'' groups
that arise as stabilizers of vertices of $P$, along with $S_4$ and $S_4\times \mZ_2$.
The lower $K$-groups of the last two groups have all been previously 
computed (see \cite[Section 5]{LO2}). We implement the method discussed
in the previous section to show the following:

\begin{example} 
$K_{-1}(\mZ [A_5\times \mZ_2]) \cong \mZ ^2$.
\end{example} 

\begin{proof}
We first recall that the group algebra $\mQ A_5$ decomposes
into simple components as follows:
$$\mQ A_5 \cong \mQ \oplus M_3(\mQ\big(\sqrt{5})\big) \oplus M_4(\mQ) \oplus M_5(\mQ).$$
Since $\mQ [A_5\times \mZ_2] \cong \mQ A_5 \oplus \mQ A_5$, we see that the
Schur indices of all the simple components in the Wedderburn decomposition of 
$\mQ [A_5\times \mZ_2]$ are equal to 1. Carter's result 
\cite{C} now tells us that $K_{-1}(\mZ [A_5\times \mZ_2])$ is torsion-free, 
and from equation (3), the rank is given by
\begin{equation}
r=1-r_{\mathbb Q} + (r_{\mathbb Q_{2}} -r_{\mathbb F_{2}}) + (r_{\mathbb Q_{3}} -r_{\mathbb F_{3}}) + 
(r_{\mathbb Q_{5}} -r_{\mathbb F_{5}}).
\end{equation}
We now proceed to compute the various terms appearing in the above expression.

Recall that for $F$ a field of characteristic $0$, $r_F$ just counts the number of 
simple components in the Wedderburn decomposition of the group algebra $F[A_5\times \mZ_2]$. 
From the discussion in the previous paragraph, we have that
$$\mQ [A_5\times \mZ_2] \cong \mQ ^2 \oplus M_3\big(\mQ(\sqrt{5})\big)^2 \oplus M_4(\mQ)^2 
\oplus M_5(\mQ)^2.$$
yielding $r_{\mQ}=8$. Now by tensoring the above splitting with $\mQ _p$, we obtain:
$$\mQ_p [A_5\times \mZ_2] \cong \mQ_p ^2 \oplus M_3\big(\mQ_p \otimes _{\mQ} \mQ(\sqrt{5})\big)^2 
\oplus M_4(\mQ_p)^2 \oplus M_5(\mQ_p)^2.$$
The second term is isomorphic to $M_3(\mQ_p(\sqrt{5}))^2$ for $p=2,3$ and 5, since 5 is not 
a square mod 8, 3, or 25, hence not a square in $\mathbb Q_p.$ In particular, for each of the 
primes $p=2,3,5$, we obtain that $r_{\mQ_2}=r_{\mQ_3}=r_{\mQ _5}=8$.

\vskip 5pt

Next let us consider the situation over the finite fields $\mF_2, \mF_3, \mF _5$. We first recall that the 
integer $r_{\mF _p}$ counts the number of $\mF_p$-conjugacy classes
of $p$-regular elements. Recall that an $\mF _p$-conjugacy class
of an element $x$ is the union of ordinary conjugacy classes of certain specific powers of $x$, where
the powers are calculated from the Galois group associated to a finite extension of the field $\mF _p$.

For $p=2$, we note that elements in $A_5 \times \mZ _2$ are $2$-regular precisely if they have order $1$, $3$ or $5$.
There is a single conjugacy class of elements of order one (the identity element). The elements of
order $3$ form a single conjugacy class inside $A_5\times \mZ_2$. Finally, the elements of order
$5$ form {\it two} conjugacy classes in $A_5\times \mZ_2$; representatives for these two conjugacy
classes are given by $g=\big((abcde), 1\big)$, and by $g^2$. So there will be either one or two $\mF_2$-conjugacy
classes of elements of order $5$. To 
determine the specific powers, we note that the minimal exponent of $A_5 \times \mZ_2$ equals
$30=2\cdot 15$. The powers of $x$ are given by considering the Galois group of the extension 
$Gal\big(\mF_2(\zeta _{15})/\mF_2\big)$, viewed as elements of $\mZ _{15}^*$. Since the Galois
group is generated by squaring, we see that the Galois group is cyclic of order 4, given by the
residue classes $\{\bar 1, \bar 2, \bar 4, \bar 8\} \subset \mZ _{15}^*$. In particular, since $\bar 2$
lies in the Galois group, we see that $g$ and $g^2$ lie in the same $\mF _2$-conjugacy class, 
implying that there is a {\it unique} $\mF_2$-conjugacy class of elements of order 5. We conclude
that there are three $\mF_2$-conjugacy classes of $2$-regular elements, giving $r_{\mF_2}=3$.

For $p=3$, the elements in $A_5 \times \mZ _2$ which are $3$-regular have order $1$, $2$, $5$, or
$10$. Since the minimal exponent of the group is $30= 3\cdot 10$, we look at the Galois group 
associated to the field extension $\mF_3 (\zeta _{10})$. Elements in the Galois group are generated
by the third power, giving us that $Gal(\mF_3(\zeta _{10})/\mF_3) = \{\bar 1, \bar 3, \bar 7, \bar 9\} \subset
\mZ _{10}^*$. In particular, the $\mF_3$-conjugacy class of any element $x\in A_5\times \mZ_2$ is 
the union of the conjugacy classes of the elements $x, x^3, x^7$, and $x^9$. Now we clearly have
a unique $\mF _3$-conjugacy class of elements of order one. For elements of order $2$, there are
{\it three} distinct (ordinary) conjugacy classes of elements of order two; each of these ordinary conjugacy
class is also an $\mF _3$-conjugacy class. Next we note that there are {\it two} conjugacy classes 
of elements of order $5$, but these two ordinary conjugacy classes are part of a single $\mF_3$-conjugacy class since the 7$^{th}$ power of such an element is its square. Finally, we observe that there are {\it two} conjugacy classes of elements
of order $10$, but these again give rise to a single $\mF _3$-conjugacy class. We conclude that
overall there are six $\mF _3$-conjugacy classes of $3$-regular elements, giving $r_{\mF_3}=6$.

Finally, for $p=5$, the elements in $A_5\times \mZ_2$ which are $5$-regular have order $1$, $2$, $3$,
or $6$. Since the minimal exponent of the group is $30=5\cdot 6$, we need to look at the Galois group
associated to the field extension $\mF_5(\zeta _ 6)$. Elements in the Galois group are generated by the
fifth power, giving us that $Gal(\mF_5(\zeta _{6})/\mF_5) = \{\bar 1, \bar 5\} \subset
\mZ _{6}^*$. This yields that the $\mF_5$-conjugacy class of an element $x\in A_5\times \mZ_2$ of order
five is the union of the ordinary conjugacy classes of $x$ and of $x^5$. Now we have a single $\mF_5$-conjugacy
class of elements of order one. For elements of order $2$, we have {\it three} ordinary conjugacy classes of
elements; but each of these also forms a single $\mF _5$-conjugacy class (since for these elements, $x^5=x$).
For elements of order $3$, we have a single ordinary conjugacy class of such elements, which also form
a single $\mF_5$-conjugacy class. Finally, for elements of order $6$, we also have a single ordinary conjugacy
class of such elements, which hence also form a single $\mF_5$-conjugacy class. We conclude that there
is a total of six $\mF_5$-conjugacy classes of $5$-regular elements, and hence $r_{\mF_5}=6$.

To conclude, we substitute in our calculations into the expression in equation (4) for the rank
of $K_{-1}(\mZ [A_5\times \mZ_2])$, giving us:
$$r= 1 - 8 + (8 - 3) + (8 - 6) + (8 - 6) = 2$$
completing our calculation for Example 4.
\end{proof}

%%%%%%%%%%%%%%%%%%%%%%%%%%%%%%%%%%%%%%%%%%%%%%%%%%%
\subsection{The class group $\tilde K_{0}(\mathbb ZG)$}

The group $\tilde K_0(\mathbb ZG)=K_0(\mathbb ZG)/\langle [\mathbb ZG]\rangle$ is closely related to the 
ideal class group of the ring of algebraic integers $R$ in a number field $F$ (that is, a field $F$ with 
$[F: \mathbb Q]$ finite).

The {\sl ideal class group} $Cl(R)$ is the group of $R$-linear isomorphism classes of non-zero ideals of $R$, 
under multiplication of ideals: $(I)(J)=(IJ)$. The identity  element is the class $(R)$ of non-zero principal ideals; so 
$Cl(R)$ measures the deviation of $R$ from being a principal ideal domain.

Suppose $A$ is a finite dimensional $\mathbb Q$-algebra. A $\mathbb Z$-{\sl order} in $A$ is a subring 
$\Lambda$ of $A$ that is finitely generated as a $\mathbb Z$-module and spans $A$ as a $\mathbb Q$-vector 
space. For each prime number $p$, the set $\mathbb Z - p \mathbb Z$ is a submonoid of $\mathbb Z$ under 
multiplication; the local ring
\[
S^{-1}\mathbb Z =\Big\{\dfrac{a}{n} : a \in \mathbb Z, n \in S \Big\}
\]
is denoted by $\mathbb Z_{(p)}$. For any $\Lambda$-module $M$, the localization $M_{(p)}=S^{-1}M$ is a 
$\Lambda_{(p)} = S^{-1}\Lambda$-module. We say $M$ is {\sl locally free} if there is an integer $n \geq 0$ so 
that, for all primes $p$, $M_{(p)}= \Lambda^n_{(p)}$ as a $\Lambda_{(p)}$-modules. This $n$ is the rank 
$rk(M)$ of $M$. By \cite[Lemma 6.14]{Sw}, finitely generated locally free $\Lambda$-modules are projective; 
so they generate a subgroup $LF(\Lambda)$ of $K_0(\Lambda)$. There is a surjective group homomorphism
\[
rk:LF(\Lambda) \rightarrow \mathbb Z, \hskip 10pt  [P]-[Q] \mapsto rk(P)- rk(Q).
\]
Its kernel is the {\sl locally free class group} $Cl(\Lambda)$ of the $\mathbb Z$-order $\Lambda$. Since $rk$ is 
split by sending $1 \in \mathbb Z$ to $[\Lambda] \in LF(\Lambda)$, there is an isomorphism $Cl(\Lambda) 
\cong LF(\Lambda)/\langle [\Lambda] \rangle$. As shown in \cite[39.13]{CR2}, $Cl(\Lambda)$ is a finite group.

In the classical case where $A$ is a number field $F$ and $\Lambda$ is its ring of algebraic integers $R$, 
$Cl(R)$ as defined above coincides with the classical ideal class group $Cl(R)$, and with $\tilde K_0(R)$. 
Its order $h(F)$ is called the {\sl class 
number} of $F$.

Our focus is the case $A=\mathbb QG$, $\Lambda=\mathbb ZG$, for a finite group $G$. Swan proved (see 
\cite[32.11]{CR1}) that every finitely generated projective $\mathbb ZG$-module is locally free; so 
$Cl(\mathbb ZG) \cong \tilde K_0(\mathbb ZG)$. But group rings and rings of algebraic integers are special 
in this respect; $Cl$ and $\tilde K_0$ differ for $\mathbb Z$-orders in general. Between these two, it is $Cl$ 
that inherits the properties of $K_0$:
$$Cl(\Lambda_1 \oplus \Lambda_2) \cong Cl(\Lambda_1) \oplus Cl(\Lambda_2)$$ 
by \cite{RU2}, and
$$Cl(M_n(R)) \cong Cl(R)$$ 
by \cite[36.6]{Re}.

If $f: A_1 \rightarrow A_2$ is a $\mathbb Q$-algebra homomorphism carrying a $\mathbb Z$-order $\Lambda_1$ 
into a  $\mathbb Z$-order $\Lambda_2$, the map $K_0(f): K_0(\Lambda_1) \rightarrow K_0(\Lambda_2)$ induces 
a group homomorphism
 $$Cl(f): Cl(\Lambda_1) \rightarrow Cl(\Lambda_2)$$
making $Cl$ a functor. If $i: \Lambda \rightarrow \Lambda'$ is the inclusion of $\Lambda$ into a maximal 
 $\mathbb Z$-order $\Lambda'$ of $A$ containing $\Lambda$, the map $Cl(i)$ is surjective; its kernel $D(\Lambda)$ 
 is known as the {\sl kernel group} of $\Lambda$, and up to isomorphism $D(\Lambda)$ is independent of the choice 
 $\Lambda'$ (see \cite{J}).  So we have a (not necessarily split) short exact sequence
\begin{equation}
0 \rightarrow D(\Lambda) \rightarrow Cl(\Lambda) \xrightarrow{Cl(i)} Cl(\Lambda') \rightarrow 0.
\end{equation}
So for a finite group $G$ and associated $\Lambda := \mZ G$, understanding the group 
$\tilde K_0(\mZ G)\cong Cl(\Lambda)$ boils down to understanding the groups 
$Cl(\Lambda ^\prime)$ and $D(\Lambda)$ and the way these fit together.

\vskip 10pt

Let us now specialize to the case of dihedral groups.
For $G$ the dihedral group $D_n$, the isomorphism of $\mathbb Q$-algebras
\[
\mathbb QD_n \cong \bigoplus_{d\,|\,n, d>2} M_2(\mathbb Q (\zeta_d + \zeta^{-1}_d)) \oplus \mathbb Q^{2\epsilon}
\]
carries $\mathbb ZD_n$ into the maximal order
\[
\Lambda' \cong  \bigoplus_{d\,|\,n, d>2} M_2(\mathbb Z [\zeta_d + \zeta^{-1}_d]) \oplus \mathbb Z^{2\epsilon}.
\]
Since $Cl(\mathbb Z)=0$, we obtain
\[
Cl(\Lambda') \cong  \bigoplus_{d\,|\,n, d>2} Cl(\mathbb Z [\zeta_d + \zeta^{-1}_d]).
\]
These summands, and the ideal class groups $Cl(\mathbb Z [\zeta_d])$, have been studied since the 
19th century work of Kummer and Dedekind. They remain difficult to compute, and their orders 
\[
h_d = h\big(\mathbb Q(\zeta_d)\big) = |Cl(\mathbb Z [\zeta_d])|
\]
\[
h^+_d = h\big(\mathbb Q(\zeta_d+ \zeta^{-1}_d)\big) = |Cl(\mathbb Z [\zeta_d+ \zeta^{-1}_d])|
\]
are still topics of active research.

A prime $p$ is {\sl regular} if $p$ does not divide $h_p$, and {\sl semiregular} if $p$ does not divide $h^+_p$. 
A conjecture originally discussed by Kummer, but currently known as Vandiver's Conjecture, is that all primes are 
semiregular.  This has been verified for all primes less than 125,000 (see \cite{Wag}). The smallest  irregular prime 
is 37. Note that the order of $Cl(\Lambda')$ in the case $G=D_n$ is $\prod_{d} h^+_d$ for $d |n$, $d>2$. If $d|n$, 
then $h^+_d | h^+_n$ (see \cite{Le}); so $Cl(\Lambda')=0$ if and only if $h^+_n = 1$. Computer calculations show 
that $h_n^+=1$ for all $n \leq 71$ (see \cite{Li}).

\vskip 5pt

The kernel group $D(\mathbb ZD_n)$ vanishes for $n$ a prime \cite{GRU}, and for $n$ a power of a regular prime 
\cite{FKW}, \cite{K}. Each surjective group homomorphism  $G \rightarrow H$ induces a surjective homomorphism 
$D(\mathbb ZG) \rightarrow D(\mathbb ZH)$ by \cite{RU2}. So if $d|n$, $D(\mathbb ZD_n)$ maps onto  
$D(\mathbb ZD_d)$. According to \cite[Theorem 5.2]{EM2}, $D(\mathbb ZD_{p^2}) \cong (\mZ_p)^d$ for all 
semiregular primes $p$, where $d > 1$ when $p$ is irregular. So $D(\mathbb ZD_n)\neq0$ when $n$ is divisible by 
the square of an irregular, semiregular prime. Of course we can remove the ``semiregular" condition if Vandiver's 
Conjecture is true.

Also in \cite{EM2}, $D(\mathbb ZD_n)$ is shown to have even order if (a) $n$ is divisible by three different odd 
primes, (b) $n$ is divisible by 4 and two different odd primes, or (c) $n$ is divisible by two different primes in $1+ 4\mZ$ (in a parallel 
result by \cite{Le}, $h_n^+$ is even if $n$ is divisible by three distinct primes in $1+ 4 \mZ$). Another result in 
\cite{EM2} is the proof that $D(\mathbb ZD_n)=0$ for all $n <60$.

\vskip 10pt

Next let us consider the situation for groups of the form $G=D_n \times \mZ_2$. Letting 
$\Lambda=\mZ [D_n\times \mZ_2]$, we again exploit the
short exact sequence (5). Let $c \in \mZ_2$ denote the non-trivial element in the cyclic group of order 2. 
Since $\mQ[D_n \times \mZ_2] \cong \mQ D_n 
\oplus \mQ D_n$ by an isomorphism ($c \rightarrow (1,-1)$) taking $\mZ[D_n \times \mZ_2]$ into 
$\mZ D_n \oplus \mZ D_n$, 
the $Cl(\Lambda')$ vanishes for $G=D_n \times \mZ_2$ if and only if it vanishes for the corresponding $D_n$.

\vskip 5pt

In contrast to $Cl(\Lambda ^\prime)$, the computation of the kernel group $D(\Lambda)$ is much more involved.
First of all, let us consider the case where $n$ is a power of $2$.
By \cite[Example 6.9]{OT}, we know that $D(\mathbb Z[D_{2^r}\times \mZ _2]) \cong \mZ _{2^r}$. In fact, 
by \cite{T}, this group is the ``Swan subgroup" generated by the class of the ideal $I$ in 
$\mathbb Z[D_{2^r} \times \mZ _2]$ generated as an ideal by 5 and the sum of the elements in 
$D_{2^r} \times \mZ_2$. For an alternative generator of this group, 
consider the cartesian square
\[
\xymatrix@C=20pt@R=30pt{
\mathbb Z[D_n \times \mZ_2] \ar[d] \ar[r] & \mathbb Z[D_n]
     \ar[d] \\
\mathbb Z[D_n] \ar[r] & \mathbb F_2[D_n]}
\]
(with the left map $c \mapsto 1$, top map $c \mapsto -1$ and remaining maps reduction mod $2$). Now by \cite{RU2},
the corresponding $K$-theory Mayer Vietoris 
sequence restricts to an exact sequence:
\[
K_1(\mZ D_n)  \rightarrow  K_1(\mathbb F_2[D_n]) \xrightarrow{\partial} D(\mathbb Z[D_n \times \mZ_2])  
\rightarrow(D(\mathbb Z[D_n]))^2  \rightarrow  0.
\]
Now in the special case where $n=2^r$, the term $(D(\mathbb Z[D_n]))^2$ vanishes. 
Through computations of $(\mathbb F_2D_{2^r})^{\ast}$ 
one can show $D(\mathbb Z[D_{2^r} \times \mZ_2])$ is also generated by $\partial(1+b+ab)$. 
Finally, since $D(\mathbb Z[D_n \times \mZ_2])$ maps onto $D(\mathbb Z[D_{2^r} \times \mZ_2])$ 
if $2^r | n$, we see that $D(\mZ[D_n \times \mZ_2])\neq 0$ for $n$ even.

Now for $n$ odd, we know that $D_n \times \mZ_2 \cong D_{2n}$. If $p$ is an odd prime, $D(\mathbb Z[D_{2p}])$ 
is the cokernel of the map $R^{\ast} \rightarrow (R/2R)^{\ast}$, where $R=\mathbb Z [\zeta_d + \zeta^{-1}_d]$ (see 
\cite[50.14]{CR2}). Generally , if $n$ is odd, the first map in the above sequence factors as the reduced norm, followed 
by reduction mod 2:
\[
\bigoplus_{d|n, d>2} \mathbb Z [\zeta_d + \zeta^{-1}_d]^{\ast} \oplus \mZ[b]^{\ast} \rightarrow \bigoplus_{d|n, d>2} 
\bigg(\dfrac{\mathbb Z [\zeta_d + \zeta^{-1}_d]}{2}\bigg)^{\ast} \oplus \mathbb F_2[b]^{\ast}
\]
followed by an isomorphism to $K_1(\mathbb F_2D_n)$. So for odd $n$, if the kernel of the surjective map
\[
D(\mathbb Z[D_{2n}]) \rightarrow D(\mathbb Z[D_n])^2
\]
is to be zero, then $R^{\ast} \rightarrow (R/2R)^{\ast}$ must be surjective for all $R=\mathbb Z [\zeta_d + \zeta^{-1}_d]$ 
with $d|n$, $d>2$.

\vskip 10pt

The simplest conclusion we can draw about $\tilde K_0(\mZ D_n)$ is that it vanishes for $n < 60$. For a regular prime 
$p$, $\tilde K_0(\mZ D_{p^r})$ vanishes whenever $h^+_{p^r}=1$ (which may be true for all $r$ but is only known to 
be so for $\phi(p^r) \leq 66$). And $\tilde K_0(\mZ D_n)$ has even order if $n$ is divisible by too many different primes 
$p$. Computer calculations are now accessible for $\tilde K_0(\mZ G)$ for groups $G$ of modest size \cite{BB}.

%%%%%%%%%%%%%%%%%%%%%%%%%%%%%%%%%%%%%%%%%%%%%%%%%%%

\subsection{The Whitehead group $Wh(G)$}

From \cite{Ba1} and \cite{Wa} we know
\[
 K_1(\mZ G) \cong \pm G^{ab} \oplus SK_1(\mZ G) \oplus \mZ ^{r-q}
\]
where $SK_1(\mZ G)$ is finite and $r$ and $q$ are the numbers $r_{\mathbb R}$, $r_{\mQ}$ of simple components of 
$\mathbb RG$, $\mQ G$ respectively. From Berman's Theorem,  $r_{\mathbb R}$ is the number of conjugacy classes 
of unordered pairs $\{x, x^{-1}\}$ with $x \in G$, and $r_{\mQ}$ is the number of conjugacy classes of cyclic subgroups 
of $G$. Furthermore
$$Wh(G) = K_1(\mathbb ZG)/\{\pm G^{ab}\} =  SK_1(\mathbb ZG) \oplus \mZ ^{r-q}.$$
For $G$ a dihedral group $D_n$ with $\epsilon$ conjugacy classes of reflections ($\epsilon=$1 or 2 according
to whether $n$ is odd or 
even), we computed in Section 3.1 that $q=\delta(n) + \epsilon$ where $\delta(n)$ is the number of divisors of $n$. 
Counting conjugacy classes of pairs  $\{x, x^{-1}\}$ with $x \in D_n$, we find $r=(n+3\epsilon)/2$. So $K_1(\mZ D_n)$ 
and $Wh(D_n)$ have rank $(n+ \epsilon)/2 -\delta(n)$. Now $(D_n)^{ab} \cong (\mZ_2)^\epsilon$, and by \cite{Ma1}, 
$SK_1(\mathbb ZD_n)=1$. So
$$Wh(D_n) \cong \mZ^{(n+ \epsilon)/2 -\delta(n)},$$
$$K_1(\mZ D_n) \cong \mZ_2^{\epsilon +1} \oplus \mZ^{(n+ \epsilon)/2 -\delta(n)}.$$

For $G=D_n \times \mZ_2$, $F[D_n \times \mZ_2] \cong (FD_n)^2$ for any coefficient field $F$ 
with $2 \neq 0$; so $r$, $q$ are doubled. Also $(D_n \times \mZ_2)^{ab} \cong (D_n)^{ab} \times \mZ_2$; and by \cite{Ma2}, 
$SK_1(\mZ[D_n \times \mZ_2])=1$. So
$$Wh(D_n \times \mZ_2) \cong \mZ^{n+ \epsilon -2\delta(n)},$$
$$K_1(\mZ [D_n\times \mZ_2]) \cong \mZ_2^{\epsilon +2} \oplus \mZ^{n+ \epsilon-2\delta(n)}.$$

\noindent This completes the computation of the lower algebraic $K$-theory of the cell stabilizers 
for the $\G_P$-action on $\mH^3$.

%%%%%%%%%%%%%%%%%%%%%%%%%%%%%%%%%%%%%%
%%%%%%%%%%%%%%%%%%%%%%%%%%%%%%%%%%%%%%

\section{Homology of $E_{\fin} \G _P$}

In order to simplify notation, we will omit the coefficients $\kz$ in the equivariant
homology theory, and will use $\G$ to denote the Coxeter group $\G _P$
associated to a finite volume geodesic polyhedron $P\subset \mH^3$.
Our goal in this section is to explain how to compute the term $H_n^{\G} (E_{\fin}\G)$.
First recall that the $\G$ action on $\mH ^3$ provides a model for $E_{\fin}$,
with fundamental domain given by the original polyhedron $P$. If the polyhedron
$P$ is non-compact, we can obtain a cocompact model for $E_{\fin} \G$ by 
equivariantly removing a suitable collection of horoballs from $\mH ^3$. A fundamental
domain for this action is a copy of the polyhedron $P$ with each ideal vertex 
truncated. According to whether $P$ is compact or not, we will use $X$ to denote 
either $\mH^3$, or $\mH^3$ with the suitable horoballs removed. We will denote
by $\hat P$ the quotient space $X/\Gamma$, a copy of $P$ with all ideal vertices
truncated.

We observe
that for this model, with respect to the obvious $\G$-CW-structure, we have a
very explicit description of cells in $X/\G=\hat P$, as well as the corresponding stabilizers. The cells in $\hat P$ are of two distinct types. The first type of cells are 
cells from the original $P$; we call these type I cells. Namely:
\begin{itemize}
\item there is one 3-cell (the interior of $\hat P$) with trivial stabilizer,
\item the 2-cells corresponding to faces of $P$, and they all have stabilizers
isomorphic to $\mZ_2$,
\item the 1-cells corresponding to elements in $E(P)$, and their stabilizers
will be finite dihedral groups, given by the special subgroup corresponding
to the two faces intersecting in the given edge,
\item the 0-cells corresponding to elements in $V(P)$, and their stabilizers
will be 2-dimensional spherical Coxeter groups, given by the special
subgroup corresponding to the three faces containing the given
vertex.
\end{itemize}
In addition to these, we have cells arising from truncating ideal vertices in $P$, which 
we call type II cells. They are as follows:
\begin{itemize}
\item each truncated ideal vertex from $P$ gives rise to a 2-cell in $\hat P$, with
trivial stabilizer,
\item each face in $P$ incident to an ideal vertex gives rise to a 1-cell in $\hat P$
with stabilizer $\mZ_2$,
\item each edge in $P$ incident to an ideal vertex gives rise to a 0-cell in $\hat P$
with stabilizer a dihedral group (isomorphic to the stabilizer of the
edge that is being truncated). From the fact that the ideal vertex stabilizers are
2-dimensional Euclidean reflection groups, the stabilizer can only be
isomorphic to one of the groups $D_2, D_3, D_4$, or $D_6$.
\end{itemize}

Now to compute the homology group $H_n^{\G} (X)$, we recall that
Quinn has established \cite[App. 2]{Qu} the existence of an Atiyah-Hirzebruch
type spectral sequence which
converges to this homology group, with $E^2$-terms given by:
$$E^2_{p,q}=H_p(\hat P \; ;\{Wh_q(\G_{\sigma})\})
\Longrightarrow H_{p+q}^{\G}(X).$$ The complex that
gives the homology of $\hat P$ with local coefficients
$\{Wh_q( \G_{\sigma})\}$ has the form
\[
\cdots \rightarrow \bigoplus_{{\sigma}^{p+1}}^{}Wh_{q}(
\G_{{\sigma}^{p+1}}) \rightarrow \bigoplus_{{\sigma}^p}^{}Wh_q(
\G_{{\sigma}^p}) \rightarrow \bigoplus_{{\sigma}^{p-1}}^{}Wh_q(
\G_{{\sigma}^{p-1}}) \cdots \rightarrow
\bigoplus_{{\sigma}^0}^{}Wh_q( \G_{{\sigma}^0}),
\]
where ${\sigma}^p$ denotes the cells in dimension $p$, and the sum
is over all $p$-dimensional cells in $\hat P$. The $p^{th}$
homology group of this complex will give us the entries for the
$E^2_{p,q}$-term of the spectral sequence. Let us recall that
\[
Wh_q(F)=
\begin{cases}
Wh(F), & q=1 \\
\tilde {K}_0(\mathbb Z F), & q=0 \\
K_q(\mathbb Z F), & q \leq -1.
\end{cases}
\]

Note that, from the description of the stabilizers given above,
we know that there is only one 3-cell, with trivial stabilizer, and
that all the 2-cells have stabilizers that are either trivial or isomorphic to
$\mZ_2$.  But it is well
known that the lower algebraic $K$-theory of both the trivial group
and $\mZ_2$ vanishes.  In particular, for the groups of 
interest to us, we have that $E^2_{p,q}=0$ except possibly for
$p=0,1$.  It is also a well-known result of Carter \cite{C} that for
a finite group $G$, $K_n(\mZ G)=0$ for $n< -1$. This tells us that
the only possible non-zero values for $E^2_{p,q}$ occur when $p=0,1$
and $-1\leq q \leq 1$, and are given by the homology of:
\begin{equation}
0\rightarrow \bigoplus _{e\in E(P)} Wh_q(\G _e) \rightarrow \bigoplus _{v\in V(P)}
Wh_q(\G _v) \rightarrow 0
\end{equation}
So in order to finish our computation of the $E^2$-terms, we merely
need to find the various $Wh_q(\G _e)$ and $Wh_q(\G _v)$, and to
analyze the morphism appearing above.

Recall that the edge stabilizers are given by dihedral groups
$D_k$ (1-cells of type I), or are isomorphic to $\mZ_2$ (1-cells of type II). 
Note that we have already largely computed the lower
algebraic $K$-theory of dihedral groups (see Section 3). Concerning
the vertex stabilizers, we note that these will be spherical
triangle groups.  The classification of these groups is well known:
up to isomorphism, they are either the
generic $D_k \times \mZ_2$ ($k\geq 2$), or one of the three
exceptional cases $S_4$, $S_4\times \mZ_2$, and $A_5\times \mZ_2$.
We observe that, for the three exceptional cases, the lower
algebraic $K$-theory has already been computed: we refer the reader
to \cite{LO2} for $S_4$, to \cite[Section 5]{Or} for $S_4\times \mZ
/2$, and to \cite[Section 5.4]{LO2} and Section 3.2 for the 
group $A_5\times \mZ_2$.
On the other hand, for the generic case, we have already given
explicit computations for the lower algebraic $K$-theory (see
Section 3).

\subsection{Analysis of the chain complex}  Now that we know the groups
appearing in the chain complex (6), let us proceed to explain how
one can compute the $E^2$-terms for the Quinn spectral sequence for
$E_{\fin}\G$.

Recall that the only edges with potentially non-trivial $K$-groups are 
the edges of type I, with stabilizers $\G _e$ isomorphic to dihedral groups. 
Each vertex in $\hat P$ has three incident edges. Vertices of type I have 
stabilizers $\G _v$ which are spherical triangle groups and
the inclusions $\G _e\hookrightarrow \G_v$ always corresponds to the
inclusion of a special subgroup $\G_e$ into the finite Coxeter group
$\G_v$. In contrast, vertices of type II have stabilizers $\G_v$ which are
dihedral; two incident edges are of type II with stabilizer isomorphic to 
$\mZ_2$. The third incident edge is of type I, with stabilizer $G_e$ one of
the dihedral groups $D_2, D_3, D_4$ or $D_6$, and with the inclusion
$G_e\hookrightarrow G_v$ an isomorphism.

We now proceed to a case by case analysis based on the
order of the edge stabilizers arising in the truncated polyhedron $\hat P$.

\vskip 10pt

\noindent {\bf Case 1: $\mathbf{n\geq 7}$.} \hskip 10pt If we have
an edge $e\in E(\hat P)$ with stabilizer $D_n$, $n\geq 7$, then both
vertices $v,w$ appearing as endpoints of $e$ must be of type I,
with stabilizer
isomorphic to $D_n\times \mZ_2$.  Indeed, such $D_n$ do not appear as
subgroups of any other spherical triangle group, nor do they appear
as stabilizers of type II vertices.  In this case, we
observe that all the remaining edges incident to either $v$ or $w$
have to have stabilizers isomorphic to $D_2$, which we know has
vanishing lower algebraic $K$-theory.  This implies that for such an
edge $e\in E(\hat P)$, we can split off the portion of the chain complex
(6) corresponding to $e$:
$$0\rightarrow Wh_q(D_n) \rightarrow 2\cdot Wh_q(D_n\times \mZ_2) \rightarrow 0.$$
Furthermore, since $D_n\hookrightarrow D_n\times \mZ_2$ is a retract, we see
that the map above is injective, hence the homology will be concentrated in
dimension zero, and will contribute a
summand $2\cdot Wh_q(D_n\times \mZ_2) / Wh_q(D_n)$ to the corresponding
$E^2_{0,q}$.

\vskip 10pt

\noindent {\bf Case 2: $\mathbf{n =6}$.} \hskip 10pt If we have an edge
$e\in E(\hat P)$ with stabilizer $D_6$, the situation is a bit more complicated.
The endpoints $v,w$ of the edge $e$ are either of type I (with 
vertex stabilizer $D_6\times \mZ_2$) or of type II (with vertex 
stabilizer $D_6$). In both cases, the remaining edges incident to the vertices
$v,w$ have stabilizers isomorphic to $\mZ_2$ or $D_2$, which we know have
vanishing lower algebraic $K$-theory. So again, for each such edge 
$e\in E(\hat P)$, we can split off the portion of the chain complex (6)
corresponding to $e$:
$$0\rightarrow Wh_q(D_6) \rightarrow Wh_q(\G _v) \oplus Wh_q(\G _w) \rightarrow 0.$$
We now consider each of the cases $q=1,0, -1$.

For $q=1$, we have that 
$Wh(D_6)$ and $Wh(D_6 \times \mZ_2)$ both vanish, so that the
sequence above degenerates to the identically zero sequence.
In particular, the edges with stabilizer $D_6$ do not contribute to 
$E^2_{1,1}$ or $E^2_{0,1}$.

For $q=0$, we recall that $Wh_0(D_6)=0$, while $Wh_0(D_6\times \mZ_2)
\cong (\mZ_2)^2$ (see \cite[Section 5.1]{LO2}. Hence each edge with stabilizer $D_6$ makes no contribution
to $E^2_{1,0}$, while the contribution to $E^2_{0,0}$ is either $0$, 
$(\mZ_2)^2$, or $(\mZ_2)^4$ according to whether none, one,
or both of its vertices have stabilizer $D_6 \times \mZ_2$.

Finally, for $q=-1$, we have that $Wh_{-1}(D_6) \cong \mZ$ and $Wh_{-1}
(D_6\times \mZ_2) \cong \mZ ^3$. Since the natural inclusion $D_6 \hookrightarrow
D_6\times \mZ_2$ is a retract, the corresponding induced map on $Wh_{-1}$
is a split injection. This implies that edges with stabilizer $D_6$ do not 
contribute to the $E^2_{1,-1}$. At the level of $E^2_{0,-1}$, we find that 
an edge with stabilizer $D_6$ contributes either a $\mZ$, $\mZ^3$, or $\mZ^5$,
according to whether none, one,
or both of its vertices have stabilizer $D_6 \times \mZ_2$.

\vskip 5pt

\noindent {\it Remark:} Let $r$ denote the number of vertices in $P$ 
with stabilizer $D_6\times \mZ_2$, and $E_6$ denotes the number of edges
with stabilizer $D_6$. Then the overall non-trivial contribution from all the edges with
stabilizer $D_6$ can be summarized as follows: 
\begin{itemize}
\item a contribution of $\mZ_2^{2r}$ to the $E^2_{0,0}$, and
\item a contribution of $\mZ ^{E_6+2r}$ to the $E^2_{0,-1}$.
\end{itemize}

\vskip 10pt

\noindent {\bf Case 3: $\mathbf{n =5}$.} \hskip 10pt If we have an
edge $e\in E(\hat P)$ with stabilizer $D_5$, then the two endpoints
$v,w$ of the edge must be of type I. However, we still have two
possibilities for the stabilizers of the two endpoints $v,w$.
Indeed, the dihedral group $D_5$ appears as a special subgroup in
two different spherical triangle groups: $D_5\times \mZ_2$, as well
as in $[3,5]\cong A_5\times \mZ_2$.  Note that for the vertices with
stabilizer $D_5\times \mZ_2$, the remaining incident edges will
have stabilizers $D_2$, which we know has vanishing lower algebraic
$K$-theory.  On the other hand, vertices with stabilizer $A_5 \times
\mZ_2$ will have two additional incident edges, one with stabilizer
$D_3$, and one with stabilizer $D_2$.  But again, we know that these
groups have vanishing lower algebraic $K$-theory.  Hence we see that
in all cases, we can split off the portion of the chain complex (6)
corresponding to $e$:
$$0\rightarrow Wh_q(D_5) \rightarrow Wh_q(\G _v)\oplus Wh_q(\G _w) \rightarrow 0.$$
Now recall that $Wh_q(D_5)$ vanishes, except for $q=1$, where
$Wh_1(D_5)\cong \mZ$. For the group $D_5 \times \mZ_2$, the 
non-vanishing lower algebraic $K$-groups consist of $Wh_1(D_5 \times
\mZ_2) \cong \mZ ^2$, and $Wh_{-1}(D_5 \times \mZ_2) \cong \mZ$. Finally,
for the group $A_5 \times \mZ_2$, all three lower $K$-groups are 
non-trivial, with $Wh_1(A_5 \times \mZ_2)\cong \mZ ^2$, $Wh_0(A_5 \times \mZ_2)
\cong \mZ_2$, and $Wh_{-1}(A_5 \times \mZ_2) \cong \mZ ^2$.

Now for $q=1$, the chain complex gives:
$$0\rightarrow \mZ \rightarrow \mZ^2 \oplus \mZ^2 \rightarrow 0$$
where the first $\mZ$ comes from $Wh_1(D_5)$, and each 
$\mZ^2$ comes from either a copy of $Wh_1(D_5\times \mZ_2)$ or 
a copy of $Wh_1(A_5\times \mZ_2)$.  Note that since
$D_5\hookrightarrow D_5\times \mZ_2$ is a retract, the induced
mapping of $\mZ \rightarrow \mZ^2$ on Whitehead groups is split
injective.  Furthermore, the authors have shown in \cite[Section
7.3]{LO2} that the map $\mZ \rightarrow \mZ^2$ on Whitehead groups
induced by the inclusion $D_5\hookrightarrow A_5\times \mZ_2$ is
likewise split injective.  Combining these two observations, we see
that {\it regardless of the vertex stabilizers}, each edge with
stabilizer $D_5$ will contribute a $\mZ ^3$ to the $E^2_{0,1}$, and
will make no contribution to $E^2_{1,1}$.

Next we consider the case $q=0$. The chain complex degenerates to:
$$0 \rightarrow Wh_0(\G _v) \oplus Wh_0(\G _w) \rightarrow 0.$$
This tells us that each edge with stabilizer $D_5$ makes 
no contribution to $E^2_{1,0}$. As for the contribution to
$E^2_{0,0}$, each such edge contributes either a $0, \mZ_2$,
or $(\mZ_2)^2$, according to whether none, one,
or both of its vertices have stabilizer $A_5 \times \mZ_2$.

Finally, we look at the case $q=-1$. Again, the chain complex degenerates to:
$$0 \rightarrow Wh_{-1}(\G _v) \oplus Wh_{-1}(\G _w) \rightarrow 0$$
giving us that edges with stabilizer $D_5$ make
no contribution to $E^2_{1,-1}$.  For the contribution to $E^2_{0,-1}$, we
see that each such edge contributes either 
a $\mZ ^2, \mZ ^3$, or $\mZ ^4$, according to whether none, one,
or both of its vertices have stabilizer $A_5 \times \mZ_2$.

\vskip 5pt

\noindent {\sl Remark:} Let $s$ denote the number of vertices in $P$ 
with stabilizer $A_5\times \mZ_2$, and $E_5$ denote the number of edges
with stabilizer $D_5$. Then the overall non-trivial contribution from all the edges with
stabilizer $D_5$ can be summarized as follows: 
\begin{itemize}
\item a contribution of $\mZ ^{3E_5}$ to the $E^2_{0,1}$, 
\item a contribution of $\mZ_2^{s}$ to the $E^2_{0,0}$, and
\item a contribution of $\mZ ^{2E_5+s}$ to the $E^2_{0,-1}$.
\end{itemize}

\vskip 10pt

\noindent {\bf Case 4: $\mathbf{n =4}$.} \hskip 10pt If we have an
edge $e\in E(\hat P)$ with stabilizer $D_4$, then we have three
possibilities for the stabilizers of the two endpoints $v,w$.
On the one hand, the vertex could be of type II, with stabilizer
isomorphic to $D_4$. Among spherical triangle groups,
$D_4$ appears as a special subgroup in only two different
groups : $D_4\times \mZ_2$, and $[3,4]\cong S_4
\times \mZ_2$. So alternatively, we could have one or both
endpoint vertices of type I, with stabilizer $D_4\times \mZ_2$
or $S_4\times \mZ_2$.

Now in all three cases, we see that the remaining
incident edges to the vertices have stabilizers isomorphic to either 
$D_2$ or $D_3$,
which have vanishing lower algebraic $K$-theory, so we can again
split off the portion of the chain complex (6) corresponding to
$e\in E(P)$.  Observing that the group $D_4$ has no lower algebraic
$K$-theory, the portion of the chain complex further degenerates
into:
$$0\rightarrow Wh_q(\G _v) \oplus Wh_q(\G _w) \rightarrow 0,$$
and hence there will be no contribution to $E^2_{1,1}$, $E^2_{1,0}$,
and $E^2_{1,-1}$.
Further observe that all three of the groups $D_4$, $D_4\times \mZ_2$ and 
$S_4\times \mZ_2$ have vanishing $Wh_1$. So no matter what the
incident vertex groups are, we see that there is also no contribution to 
$E^2_{0,1}$ from the edges with stabilizer $D_4$.

Next let us consider what happens with $Wh_0$. 
Both $D_4\times \mZ_2$ and $S_4\times \mZ_2$ have $Wh_0$ 
isomorphic to $\mZ_4$, while $D_4$ has vanishing $Wh_0$.  In particular, 
we see that each edge with stabilizer $D_4$ will contribute $0$, $\mZ_4$, or
$(\mZ_4)^2$ to $E^2_{0,0}$, according to whether the edge joins
two, one, or no ideal vertices.

The situation for $Wh_{-1}$ is likewise more
complicated, as we have $Wh_{-1}(D_4\times \mZ_2)=0$, while
$Wh_{-1}(S_4\times \mZ_2)\cong \mZ$.  Hence the edge with
stabilizer $D_4$ will contribute $0,\mZ$, or $\mZ^2$ to $E^2_{0,-1}$
according to whether it has none, one, or two of its vertices with
stabilizer $S_4\times \mZ_2$.

\vskip 5pt

\noindent {\it Remark:} Let $t$ denote the number of vertices in $P$ 
with stabilizer $S_4\times \mZ_2$, $u$ denote the number of ideal
vertices with stabilizer $[4,4]=P4m$, and $E_4$ denote the
number of edges with stabilizer $D_4$. Then the overall non-trivial contribution 
from all the edges with stabilizer $D_4$ can be summarized as follows: 
\begin{itemize}
\item a contribution of $\mZ_4^{2E_4 - 2u}$ to the $E^2_{0,0}$, and
\item a contribution of $\mZ ^{t}$ to the $E^2_{0,-1}$.
\end{itemize}

\vskip 10pt

\noindent {\bf Case 5: $\mathbf{n \leq 3}$.} \hskip 10pt 
For edges $e \in E(\hat P)$
with stabilizer $D_3$ or $D_2$, the contribution to the
$E^2$-terms in the Quinn spectral sequence is concentrated on those
vertices with stabilizer $D_3\times \mZ_2$ or $D_2\times \mZ_2$.  Indeed, we 
have on the one hand that the lower algebraic $K$-theory of the edge groups
$D_3$ and $D_2$ vanish, so the contribution to the $E^2$-terms will come 
solely from the corresponding vertex groups.  The contribution from the vertices
having an incident edge with stabilizer $D_n$, $n\geq 4$, has already
been accounted for (in the appropriate case above). So we are left with 
dealing with vertices, all of whose incident edges are either $D_3$ or $D_2$.
The only such vertices have stabilizer $S_4$, $D_3\times \mZ_2$, or $D_2\times \mZ_2$.
Amongst these, the only non-vanishing $K$-theory appears for $D_3\times \mZ_2 \cong
D_6$ (with $K_{-1}$ isomorphic to $\mZ$) and $D_2\times \mZ_2$ (with $\tilde K_0$ 
isomorphic to $\mZ_2$).

\vskip 5pt

\noindent {\it Remark:} Let $v$ denote the number of vertices in $P$ 
with stabilizer $D_3\times \mZ_2$, and $w$ denote the number of
vertices with stabilizer $D_2\times \mZ_2$
Then the overall non-trivial contribution 
from all the edges with stabilizer $D_3$ and $D_2$ can be summarized as follows: 
\begin{itemize}
\item a contribution of $\mZ_2^{w}$ to the $E^2_{0,0}$, and
\item a contribution of $\mZ ^{v}$ to the $E^2_{0,-1}$.
\end{itemize}

\subsection{Collapsing of the spectral sequence and applications.}  Now collecting
the information from the previous few sections, we immediately see
that the $E^2$-terms in the Quinn spectral sequence all vanish, with
the possible exception of $E^2_{0,1}$, $E^2_{0,0}$, and
$E^2_{0,-1}$ (within the range $q\leq 1$).  

In particular, the spectral sequence 
always collapses at the $E^2$-stage, and yields the desired homology group.
Furthermore, from the analysis in the previous section, we obtain (see the 
Remarks after Cases 2, 3, 4, and 5) the following explicit formulas:
$$H_1^{\G}(X ;\kz) \cong \mZ^{3E_5} \oplus Q_1$$
$$H_0^{\G}(X ;\kz) \cong (\mZ_2)^{2r+s+w} \oplus (\mZ_4)^{2E_4-2u} \oplus Q_0$$
$$H_{-1}^{\G}(X ;\kz) \cong \mZ^{2r+s+t+v+2E_5+E_6} \oplus Q_{-1}$$
where in the expression above we have that:
\begin{itemize}
\item $r$ is the number of special subgroups isomorphic to $D_6\times \mZ_2$,
\item $s$ is the number of special subgroups isomorphic to $A_5\times \mZ_2$,
\item $t$ is the number of special subgroups isomorphic to $S_4\times \mZ_2$,
\item $u$ is the number of ideal vertices in $P$ with stabilizer $P4m$,
\item $v$ is the number of special subgroups isomorphic to $D_3\times \mZ_2$,
\item $w$ is the number of special subgroups isomorphic to $D_2\times \mZ_2$,
\item $E_4, E_5,$ and $E_6$ are the number of edges in $P$ with stabilizer $D_4, 
D_5$, and $D_6$ respectively.
\end{itemize}
and the terms $Q_q$ are given by:
$$Q_q \cong \bigoplus _{e\in E_l(P)} \frac{2\cdot Wh_q(\G _e \times \mZ_2)}
{Wh_q(\G _e)}$$ where $E_l(P)$ denotes the subset of edges of $P$
having ``large'' stabilizer, i.e. satisfying $\G_e=D_n$ with $n\geq
7$.

\vskip 10pt

Now let us discuss some applications of these spectral sequence computations. 
Recall that the Farrell-Jones isomorphism conjecture holds for the groups $\G _P$,
and hence the lower algebraic $K$-theory $Wh_*(\G_P)$ of $\G _P$ can be identified 
with $H_*^{\G}(E_{\vc}\G _P ;\kz)$. Furthermore, the term $H_*^{\G}(E_{\fin}\G _P ;\kz)$
computed above is a direct summand inside $H_*^{\G}(E_{\vc}\G _P ;\kz)$, and hence
a direct summand inside $Wh_*(\G _P)$ (see equation (2) in Section 2).

For $*=0, 1$, the remaining terms in equation (2) are known to be {\it purely torsion}, and
in particular, vanish when we tensor with $\mQ$. Specializing to $*=1$, and
keeping the notation from above, we immediately obtain that
$$Wh(\G _P) \otimes \mQ = \mQ ^{3E_5} \oplus (Q_1 \otimes \mQ). $$
Along with the computations in Section 3.4, this allows us to explicitly determine 
the rationalized Whitehead group:

\begin{Thm}
Let $\G _P$ be a hyperbolic reflection group with associated finite volume 
geodesic polyhedron $P\subset \mH ^3$. Then the rationalized Whitehead
group has rank:
\begin{equation}
rk \big(Wh(\G _P) \otimes \mQ \big) = \frac{3}{2} \sum_{n} E_n\big[ n+\epsilon(n) - 2 \delta (n) \big]
\end{equation}
where $E_n$ is the number of edges in $P$ with stabilizer $D_n$, 
$\epsilon(n)$ equals 1 or 2 according to whether $n$ is odd or even,
and $\delta (n)$ is the number of divisors of $n$.
\end{Thm}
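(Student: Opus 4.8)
The plan is to start from the identification
$Wh(\G_P) \otimes \mQ \cong \mQ^{3E_5} \oplus (Q_1 \otimes \mQ)$
already obtained from the collapse of the Quinn spectral sequence at the $E^2$-stage, and to reduce the theorem to a purely arithmetic rank computation. Tensoring with $\mQ$ kills all torsion and sends ranks additively over direct sums, so it suffices to compute $\dim_{\mQ}(Q_1 \otimes \mQ)$ and add the contribution $3E_5$ of the first summand. Recalling the definition
$Q_1 \cong \bigoplus_{e\in E_l(P)} 2\cdot Wh(\G_e \times \mZ_2)/Wh(\G_e)$,
the computation splits edge by edge over the ``large'' edges, i.e.\ those with stabilizer $\G_e = D_n$ for $n \geq 7$.

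First I would compute the contribution of a single large edge $e$ with stabilizer $D_n$. The inclusion $D_n \hookrightarrow D_n \times \mZ_2$ admits the projection as a retraction, so the induced map $Wh(D_n) \to Wh(D_n \times \mZ_2)$ is split injective; consequently the relevant quotient is free of rank $2\cdot rk\big(Wh(D_n \times \mZ_2)\big) - rk\big(Wh(D_n)\big)$. Substituting the explicit ranks computed in Section 3.4, namely $rk\big(Wh(D_n)\big) = (n+\epsilon)/2 - \delta(n)$ and $rk\big(Wh(D_n \times \mZ_2)\big) = n + \epsilon - 2\delta(n)$, a one-line simplification collapses this to $\tfrac{3}{2}\big(n + \epsilon(n) - 2\delta(n)\big)$. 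Summing over all large edges then gives $\dim_{\mQ}(Q_1 \otimes \mQ) = \sum_{n\geq 7} E_n \cdot \tfrac{3}{2}\big(n+\epsilon(n)-2\delta(n)\big)$.

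Next I would fold the $3E_5$ summand into the same uniform expression. Evaluating $\tfrac{3}{2}\big(n+\epsilon(n)-2\delta(n)\big)$ at $n=5$, where $\epsilon=1$ and $\delta=2$, yields exactly $3$, so each $D_5$-edge contributes $3$ to the total rank and the factor $3E_5$ is absorbed into the general formula. To extend the summation over \emph{all} $n$, I would finally check that the same expression vanishes identically for the remaining relevant values $n \in \{2,3,4,6\}$: in each of these cases $n + \epsilon(n) - 2\delta(n) = 0$, consistent with the vanishing Whitehead groups recorded in Cases 4 and 5 of the chain-complex analysis. Adding the vanishing terms changes nothing, and the sum becomes $\tfrac{3}{2}\sum_n E_n\big[n + \epsilon(n) - 2\delta(n)\big]$, as claimed.

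The argument involves no deep step: the result is a consolidation of the per-edge contributions already extracted from the spectral sequence into a single divisor-theoretic formula. The only points demanding care are ensuring that the split injectivity of $Wh(D_n) \to Wh(D_n \times \mZ_2)$ licenses the clean passage to the rank of the quotient, and the bookkeeping that every possible value of an edge stabilizer is covered by exactly one case of the preceding analysis, so that no contribution is double-counted or omitted.
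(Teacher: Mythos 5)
Your proposal is correct and follows essentially the same route as the paper's own proof: starting from the identification $Wh(\G_P)\otimes\mQ \cong \mQ^{3E_5}\oplus(Q_1\otimes\mQ)$, computing each large edge's contribution as $2\,rk\big(Wh(D_n\times\mZ_2)\big)-rk\big(Wh(D_n)\big)=\tfrac{3}{2}\big(n+\epsilon(n)-2\delta(n)\big)$ via the ranks from Section 3.4, and then absorbing the $3E_5$ term and the vanishing cases $n\in\{2,3,4,6\}$ into the uniform formula. Your explicit remark that split injectivity (from the retraction $D_n\times\mZ_2\to D_n$) is what licenses the rank-of-quotient computation is a point the paper makes earlier, in Case 1 of Section 4.1, rather than inside the proof itself, but it is the same argument.
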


\begin{proof}
By the discussion above, we need to analyze the term $Q_1 \otimes \mQ$. For a given edge with stabilizer 
$D_n$ ($n\geq 7$), we see a contribution of $2rk(Wh(D_n \times \mZ_2))-rk(Wh(D_n))$ to 
the overall rank of $Q_1 \otimes \mQ$. Appealing to the ranks of $Wh$ computed in section 3.4, we see that such an edge contributes
$$2\big( n+ \epsilon -2\delta(n)  \big) - \big((n+ \epsilon)/2 -\delta(n)\big) =
(3/2) \cdot \big( n + \epsilon - 2\delta (n) \big)$$
to the rank of $Q_1 \otimes \mQ$. Summing over all edges with 
stabilizer $D_n$, $n\geq 7$, and adding in the contribution from the edges
with stabilizer $D_5$, we obtain that:
$$rk(Wh(\G _P) \otimes \mQ) = 3E_5 + \frac{3}{2} \sum_{n \geq 7} E_n\big[ n+\epsilon(n) - 2 \delta (n) \big].$$
To conclude, we merely observe that for $n=2,3,4,6$, the expression 
$n+\epsilon(n) - 2 \delta (n)$ equals zero, while for $n=5$, we have
$5 + \epsilon(5) - 2 \delta (5) = 5 + 1 - 2(2) = 2$. So we see that the expression
computed above for $rk(Wh(\G _P) \otimes \mQ)$ is in fact equal to the expression 
appearing in equation (7), concluding the proof.
\end{proof}

\vskip 5pt

Next, let us consider the case $*=-1$. In this case, it is known that the remaining
terms in the splitting given in equation (2) {\it all vanish}. In particular, this gives us
isomorphisms
$$K_{-1}(\mZ \G _P) \cong H_{-1}^{\G}(X ;\kz) \cong \mZ^{2r+s+t+2E_5+E_6} \oplus Q_{-1}.$$
Furthermore, we have explicit computations (see Theorem 1) for the various
$K$-groups appearing in the description of $Q_{-1}$. Substituting in those 
calculations, we immediately obtain:

\begin{Thm}
Let $\G _P$ be  a hyperbolic reflection group with associated finite volume 
geodesic polyhedron $P\subset \mH ^3$. Then the group $K_{-1}(\mZ \G _P)$
is torsion-free, with rank given by the expression:
$$ 2r + s + t +v+ 2E_5 + E_6 + \sum _{n\geq 7} E_n\big(1- 3\delta(n) + 3 \tau(n) + 2\sigma_2(n) \big) $$
where $r,s,t,v$ are the number of vertex stabilizers isomorphic to $D_6\times \mZ_2$,
$A_5\times \mZ_2$, $S_4\times \mZ_2$, and $D_3\times \mZ_2$ respectively, the $E_k$ are the number
of edges in $P$ with stabilizer $D_k$, and the number theoretic quantities 
$\delta(n), \tau(n), \sigma_2(n)$ are as defined in Section 3.1.
\end{Thm}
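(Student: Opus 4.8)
The plan is to start from the identification of $K_{-1}(\mZ \G _P)$ already assembled immediately above the statement. Since the Farrell--Jones conjecture holds for $\G _P$ in the range $*\leq 1$, and since for $*=-1$ the non-$E_{\fin}$ summands in equation (2) all vanish, we have $K_{-1}(\mZ \G _P)\cong H_{-1}^{\G}(X;\kz)$; and the spectral sequence has collapsed to give
$$H_{-1}^{\G}(X;\kz)\cong \mZ^{2r+s+t+v+2E_5+E_6}\oplus Q_{-1}.$$
Thus the entire problem reduces to understanding $Q_{-1}$, which by definition is the direct sum, over edges $e$ of $P$ with large stabilizer $\G_e=D_n$ ($n\geq 7$), of the degree-zero homology groups $2\cdot Wh_{-1}(D_n\times \mZ_2)/Wh_{-1}(D_n)$ of the split-off portions of the chain complex (6).

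The first and essential step is to prove that $K_{-1}(\mZ\G_P)$ is \emph{torsion-free}, which amounts to showing each such cokernel is in fact free abelian (the explicit summand $\mZ^{2r+s+t+v+2E_5+E_6}$ being torsion-free by inspection). I would argue as follows. Write $A=K_{-1}(\mZ D_n)$ and $B=K_{-1}(\mZ[D_n\times \mZ_2])$. Because $D_n\hookrightarrow D_n\times \mZ_2$ is a retract, the induced map $i_*\colon A\to B$ is split injective; fix a retraction $p\colon B\to A$ with $p\circ i_*=\mathrm{id}_A$. For an edge $e$ with large stabilizer both endpoints $v,w$ have stabilizer $D_n\times \mZ_2$, and the relevant map in (6) is $\phi\colon A\to B\oplus B$, $x\mapsto (i_*x,\,\pm j_*x)$, where $i_*,j_*$ are both split injective. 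The unipotent automorphism $(a,b)\mapsto (a,\,b\mp j_*p(a))$ of $B\oplus B$ carries $\phi$ to $x\mapsto (i_*x,0)$, whence $\coker(\phi)\cong \big(B/\im(i_*)\big)\oplus B$. Since $i_*$ is split injective, $B/\im(i_*)$ is a direct summand of the free abelian group $B$, hence free, and therefore $\coker(\phi)$ is free. This establishes torsion-freeness.

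The second step is the rank computation, now routine. By the formulas of Theorem 1, each large edge with stabilizer $D_n$ contributes a free summand of rank
$$2\big(1-2\delta(n)+\sigma_2(n)+2\tau(n)\big)-\big(1-\delta(n)+\tau(n)\big)=1-3\delta(n)+3\tau(n)+2\sigma_2(n).$$
Summing over all edges with stabilizer $D_n$ for $n\geq 7$, and adding the rank $2r+s+t+v+2E_5+E_6$ of the explicit free summand, yields exactly the expression in the statement.

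The one point that demands genuine care rather than bookkeeping is the torsion-freeness in the second paragraph: a bare count of ranks would leave open the possibility of $2$-torsion sneaking into the cokernel, so the retract/split-injectivity observation and the accompanying change of basis are indispensable and must be invoked explicitly. Once freeness is secured, everything else is direct substitution from Section 3 and the collapse of the Quinn spectral sequence.
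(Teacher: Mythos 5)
Your proposal is correct and takes essentially the same route as the paper, which likewise deduces the theorem immediately from the collapse of the Quinn spectral sequence (giving $K_{-1}(\mZ \G_P)\cong \mZ^{2r+s+t+v+2E_5+E_6}\oplus Q_{-1}$, since the cokernel terms in equation (2) vanish for $*=-1$) and then substitutes the rank formulas of Theorem 1 into $Q_{-1}$. The only difference is that you make explicit the torsion-freeness of each cokernel $2\cdot Wh_{-1}(D_n\times \mZ_2)/Wh_{-1}(D_n)$ via the retraction and change of basis; the paper leaves this implicit, having only recorded (Case 1 of Section 4.1) that the map $Wh_q(D_n)\to 2\cdot Wh_q(D_n\times\mZ_2)$ is split injective, with all groups involved free abelian by Theorem 1, from which the freeness of the quotient follows just as in your argument.
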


\vskip 5pt

Finally, let us make a few comments on the case $*=0$. In this situation, we cannot deduce
any similar nice formulas for the $\tK_0(\mZ \G_P)$, the difficulties being twofold. On the one hand,
the computation of $H_0^{\G}(E_{\vc}\G _P ;\kz)$ involves knowing the reduced $\tK_0$ for dihedral
groups and products of dihedral groups with $\mZ_2$. As we saw in Section 3.3, these computations
are closely related to some difficult questions in algebraic number theory, and always yield
torsion groups. On the other hand,
the remaining terms in the expression for $\tK_0(\mZ \G_P)$ (see expression (2) in Section 2) 
can sometimes be non-zero (see Section 5), and are likewise (infinitely generated) torsion groups. 
Since it is known that these remaining terms are also
purely torsion, we can only conclude that the group $\tK_0(\mZ \G_P)$ is a torsion group (which is
already known to follow from the Farrell-Jones isomorphism conjecture for $\G_P$).

\section{Cokernels of relative assembly maps for $V\in \mathcal V$}

In this section, we focus on understanding the second term appearing in the
splitting formula given in equation (2).  We recall that this term is of the form:
\begin{equation}
\bigoplus _{V\in \mV}H_n^{V}(E_{\fin}(V)\rightarrow*)
\end{equation}
where $\mV$ consists of one representative from each conjugacy class
of the infinite groups that  arise as a stabilizers of single geodesics in $\mH ^3$,
and $H_n^V(E_{\fin}(V)\rightarrow*)$ is the cokernel of the maps on
homology $H_n^{V}(E_{\fin}(V); \kz) \rightarrow H_n^{V}(*; \kz)$,
which we call the {\it relative assembly map}.

Let $\g$ be a geodesic giving rise to a summand in expression (8).
Since the stabilizer of $\g$ is assumed to be infinite, we conclude
that $\stab(\g)$ acts cocompactly on $\g$, and hence the projection
$\pi(\g)$ of $\g$ to the fundamental domain $P$ is compact.  There
are three possibilities for the projection $\pi(\g)$:
\begin{itemize}
\item either it intersects the interior of $P$,
\item it lies entirely in the $2$-skeleton of $P$, and intersects the interior
of a face,
\item it lies entirely in the 1-skeleton of $P$.
\end{itemize}
The argument given by Lafont-Ortiz in \cite[Prop. 3.5, 3.6]{LO1}
applies verbatim to show that in the first two cases, the stabilizer
of the geodesic $\g$ has to be isomorphic to one of the groups
$\mZ$, $D_\infty$, $\mZ \times \mZ_2$, or $D_\infty \times \mZ_2$.
Now for all four of these infinite groups, it is well known that the
cokernel of the relative assembly map is trivial (see \cite{Ba2},
\cite{Wd} for the first two, and \cite{Pe} for the last two).  In
particular, these groups will make {\it no contribution} to the
expression  (8).

So let us consider geodesics of the third type.  First of all, note
that two such geodesics $\g _1, \g _2$ will have $Stab(\g _1)$
conjugate to $Stab(\g _2)$ if and only if $\pi(\g _1) = \pi (\g _2)$
(as subsets of $P$).  In particular, we see that among the groups
in $\mV$, there are {\it at most finitely many} groups of this type.
Indeed, since there are exactly $|E(P)|< \infty$ edges in the
1-skeleton of $P$, we can have at most $|E(P)|$ such subgroups 
(up to conjugacy)
inside $\G _P$.  In particular, the infinite direct sum in expression (8)
really collapses down to a finite direct sum.

We now focus on identifying (1) the actual number of such subgroups,
and (2) the corresponding cokernels for the relative assembly map.
In order to complete this process, we first observe the following:
for any such group, we can consider the action on the corresponding
geodesic $\g$, obtaining a splitting
$$0 \rightarrow \fix_{\G}(\g) \rightarrow
\sta_{\G}(\g) \rightarrow \iso_{\G, \g}(\mathbb R) \rightarrow 0$$
where $\fix_{\G}(\g)$ is the subgroup of $\G$ fixing $\g$ pointwise,
while $\iso_{\G, \g}(\mathbb R)$ is the induced action of the
stabilizer $\sta_{\G}(\g)$ on $\mR$ (identified with the geodesic
$\g$).  Note that since $\sta_{\G}(\g)$ is known to act discretely
on $\mH^3$, and cocompactly on $\g$, we immediately obtain that
$\iso_{\G, \g}(\mathbb R)$ is a discrete, cocompact subgroup of
$\iso(\mR)$, i.e. has to be isomorphic to $\mZ$ or $D_\infty$.  On
the other hand, the term $\fix_{\G}(\g)$ corresponds to the subset
fixing $\g$ pointwise, and taking a point in $\gamma$ that projects
to the {\it interior} of an edge $e$, we immediately see that this
group must be isomorphic to a dihedral group $D_n$ (coinciding with
the stabilizer of the edge $e$).  As in the previous section, let us
proceed with a case by case analysis, according to the order of the
group $\fix_{\G}(\g)$.

\vskip 10pt

\noindent {\bf Case 1: $\mathbf{n\geq 6}$.} \hskip 10pt  If we have
a geodesic $\g$ with infinite stabilizer, satisfying $\fix_{\G}(\g)\cong D_n$ with $n\geq 6$,
then we observe that $\pi(\g) \subset P$
coincides with a single edge in the 1-skeleton of $P$ (see \cite[Section 4]{LO2}), with
stabilizer $D_n$.  Furthermore, both vertex endpoints of the edge
must be non-ideal, with vertex stabilizer isomorphic to $D_n\times \mZ_2$. 

It is easy to see that the vertex stabilizers actually leave the geodesic
$\g$ invariant.
So applying Bass-Serre theory, we see that to each edge of $P$ with internal angle
$\pi/n$ with $n\geq 6$, one has an element in $\mV$ isomorphic to
$(D_n\times \mZ_2)*_{D_n}(D_n\times \mZ_2) \cong D_n\times
D_\infty$.  We note that for $V$ of the form $D_n\times D_\infty$,
the cokernel of the relative assembly map satisfies:
$$H_*^{V}(E_{\fin}(V)\rightarrow*)\cong NK_*(\mZ D_n)$$
where $NK_*(\mZ D_n)$ is the Bass Nil-group associated to the
dihedral group $D_n$ (see \cite{D}, \cite{DKR}, \cite{DQR}).  
In particular we see that {\it each geodesic extending an edge with
stabilizer $D_n$, $n\geq 6$, joining non-ideal vertices (in the case $n=6$), 
will contribute a single copy of the
Bass Nil-group for $D_n$.}

\vskip 10pt

\noindent {\bf Case 2: $\mathbf{n= 5}$.} \hskip 10pt  If we have a
geodesic $\g$ with the property that $\fix_{\G}(\g)\cong D_5$, then
we observe that, once again, the projection $\pi(\g)$ of the
geodesic into the polyhedron $P$ will consist of a single edge with
stabilizer $D_5$ (see \cite[Section 4]{LO2}).  Note that the endpoints of this 
edge must have stabilizer either $D_5\times \mZ_2$, or $A_5 \times \mZ_2$.  

Again, this allows us to use Bass-Serre theory to identify the
stabilizer of $\g$.  It will be an amalgamation of two finite groups
over the common (index two) subgroup $D_5$.  Furthermore, the
vertex groups correspond precisely to the subgroups of the 
vertex stabilizer that {\it also leaves $\g$ invariant}. It is easy to
check that, regardless of whether the vertex stabilizer is 
$D_5\times \mZ_2$ or $A_5\times \mZ_2$, this
subgroup has to be isomorphic to $D_{10}\cong D_5\times \mZ_2$.  We conclude
that each geodesic with $\fix_{\G}(\g)\cong D_5$ must have
stabilizer isomorphic to $D_{10}*_{D_5}D_{10}\cong D_5\times
D_\infty$. The cokernel of this relative assembly map 
is known to be isomorphic to the Bass Nil-group $NK_*(\mZ D_5)$ 
(\cite{LO3}, see also \cite{DKR}), which are known to vanish for 
$*\leq 1$. We conclude that
{\sl geodesics extending edges with stabilizer $D_5$ make no contribution to the lower algebraic
$K$-theory.}

\vskip 10pt

\noindent {\bf Case 3: $\mathbf{n= 4}$.} \hskip 10pt  If we have a
geodesic $\g$ with the property that $\fix_{\G}(\g)\cong D_4$, then
we observe that, once again, the projection $\pi(\g)$ of the
geodesic into the polyhedron $P$ will consist of a single edge with
stabilizer $D_4$.  In this case, the two endpoints of this edge must
have stabilizer either isomorphic to $D_4\times \mZ_2$ or to
$S_4\times \mZ_2$.  In both cases, one can see that the subgroup
of the vertex stabilizers that also leaves the geodesic invariant are
isomorphic to $D_4 \times \mZ_2$. Hence, we obtain that the stabilizer 
of $\g$ is an
amalgamation $(D_4\times \mZ_2)*_{D_4}(D_4\times \mZ_2) \cong
D_4\times D_\infty$.  We have discussed this cokernel in
\cite[Section 6.4]{LO2}: it can be identified with the Bass Nil
groups $NK_*(\mZ D_4)$ (see also \cite{D}, \cite{DKR}, \cite{DQR}).  In particular, we 
see that {\it each geodesic extending an edge with stabilizer $D_4$, and with
infinite stabilizer, will contribute 
a single copy of the Bass Nil group for $D_4$.} 

\vskip 5pt

\noindent {\it Remark:} We note that these Nil-groups
have been partially computed by Weibel \cite{We}, who showed that
$NK_0(\mZ D_4)$ is isomorphic to 
the direct sum of a  countably infinite free
$\mathbb Z/2$-module with a countably infinite free $\mathbb Z/4$-module. He 
also showed that $NK_1(\mZ D_4)$ is a countably infinite torsion group of exponent 2 or 4.

\vskip 10pt

\noindent {\bf Case 4: $\mathbf{n= 3}$.} \hskip 10pt  Geodesics $\g$
with the property that $\fix_{\G}(\g)\cong D_3$ are somewhat more
difficult to track.  The reason for this is that an edge in $P$ with
stabilizer $D_3$ can have four possible stabilizers for the
endpoints.  Indeed, the spherical triangle groups containing $D_3$
as a special subgroup include $D_3\times \mZ_2$, $S_4$, $S_4\times
\mZ_2$, and $A_5\times \mZ_2$.

Now if the projection $\pi(\g)$ of the geodesic is a union of edges
forming an interval, then we can use Bass-Serre theory to write out
the stabilizer of the geodesic. From the tessellations associated to
the four possible vertex stabilizers, we can readily see that the
geodesic is reflected whenever the endpoint has stabilizer $D_3\times
\mZ_2$, $S_4\times \mZ_2$, or $A_5\times \mZ_2$.  In all three
cases, one sees that the subgroup of the vertex stabilizer that
leaves the $\g$ invariant is in fact isomorphic to $D_3\times
\mZ_2$.  We conclude that in this case, the stabilizer of $\g$ is
isomorphic to $(D_3\times \mZ_2) *_{D_3}(D_3\times \mZ_2) \cong
D_3\times D_\infty$.  But the authors have shown that the cokernel
of the relative assembly map for this group is isomorphic to the
Bass Nil-group $NK_*(\mZ D_3)$ (see \cite[Section 5]{LO1},
as well as \cite{D}, \cite{DKR}, \cite{DQR}).

Alternatively, the projection $\pi(\g)$ of the geodesic could be a
union of edges forming a closed loop in the 1-skeleton $P^{(1)}$ 
of $P$.  In this case, we see that the stabilizer of $\g$ fits into a short
exact sequence:
$$0 \rightarrow D_3 \rightarrow \sta_{\G}(\g) \rightarrow \mZ
\rightarrow 0$$ and hence can be written as a semidirect product
$D_3 \rtimes_\alpha \mZ$. In this case, the cokernel of the relative
assembly map will be a Farrell Nil-group $NK_i(\mZ D_3, \alpha)$.

Summarizing this discussion, we see that each orbit of a periodic
geodesic in $\mH ^3$ which is pointwise fixed by a $D_3$ contributes
a single copy of a Farrell Nil-group $NK_*(\mZ D_3; \alpha)$ (for a suitable
automorphism $\alpha \in \aut(D_3)$). Finally, we remark that for $*=0,1$, the 
Farrell Nil-groups $NK_*(\mZ D_3; \alpha)$ are known to vanish, irrespective of the 
automorphism $\alpha$. Hence we obtain that
{\sl the geodesics extending edges with stabilizer $D_3$ make 
no contribution to the lower algebraic
$K$-theory.}

\vskip 10pt

\noindent {\bf Case 5: $\mathbf{n= 2}$.} \hskip 10pt  Geodesics $\g$
with $\fix_{\G}(\g)\cong D_2$ are the most difficult ones to handle.  
The primary difficulty is that every spherical triangle
group contains $D_2$ as a special subgroup. Now assume we
have such a geodesic $\g$, and consider its projection into the 
1-skeleton $P^{(1)}$ of $P$. The projection is either:
\begin{itemize}
\item a union of edges forming a closed loop inside $P^{(1)}$, or
\item a union of edges forming a path inside $P^{(1)}$.
\end{itemize}
If the projection is a path, Bass-Serre theory applies,
and the stabilizer of the geodesic $\g$ will have to be isomorphic
to one of the groups $D_4*_{D_2}D_4$, $D_4*_{D_2}(D_2\times \mZ_2)$,
or $(D_2\times \mZ_2)*_{D_2}(D_2\times \mZ_2) \cong D_2\times
D_\infty$ (depending on the nature of the endpoints of the path).  
In this situation, the authors have established (see
\cite[Section 4]{LO1} and \cite[Sections 6.2, 6.3]{LO2}) that for
all three of these groups, the cokernels of the relative assembly
map are isomorphic to the Bass Nil-group $NK_*(\mZ D_2)$ corresponding
to the canonical index two subgroup isomorphic to $D_2\times \mZ$.
These Bass Nil-groups are known to be
isomorphic to $\bigoplus _\infty \mZ_2$, a countable direct
sum of $\mZ_2$, in dimensions $*=0$ and $*=1$.

Alternatively, if the projection is a closed loop, then from the
short exact sequence:
$$0 \rightarrow D_2 \rightarrow \sta_{\G}(\g) \rightarrow \mZ
\rightarrow 0$$ we have that the stabilizer is of the form
$D_2\rtimes _\alpha \mZ$, $\alpha \in \aut(D_2)$.  We now claim that the 
geometry of the situation forces $\alpha =Id$, i.e. the stabilizer is in fact a direct
product $D_2 \times \mZ$. In order to see this, 
we first observe that $\aut(D_2)= \aut(\mZ_2 \times \mZ_2) \cong S_3$, 
given by an arbitrary permutation of the three non-zero elements 
in $D_2$. Let us try to rule out the various automorphisms in $\aut(D_2)$.

First, let us denote by $g,h$ the reflections in the hyperplanes  $P_1,P_2$ 
extending the two faces of the polyhedron incident to one of the edges in the closed loop. 
Now the fixed subgroup of $\g$ can be identified with the subgroup
$D_2$ consisting of $\{1, g, h, gh\}$, and the elements $g,h$ are the 
canonical generators of the special subgroup $D_2$. We also have
that $\alpha$ permutes the subset $\{g,h, gh\}$. But observe that
$g,h$ are reflections, whereas their product $gh$ is a rotation 
by $\pi$ around the geodesic $\gamma$. Since rotations are 
never conjugate to reflections, this implies that $gh$ must be
fixed by the permutation $\alpha$. So the only possibility that is 
left is where $\alpha$ interchanges $g$ and $h$.

In order to rule out this last possibility, we can look at the
element $\tau$ in the stabilizer of $\gamma$ that acts via a minimal translation along
$\gamma$. Note that
$\tau$ either maps each $P_i$ to itself, or interchanges $P_1$ and $P_2$. 
So to rule out the case where $\alpha$ interchanges the two reflections
$g$ and $h$, it is sufficient to identify the element $\tau$, and verify that
it leaves invariant each of the hyperplanes $P_i$.

We now focus on explicitly describing the element $\tau$ in the group $\G$.
Given a pair of consecutive edges in this loop, we have that the corresponding
common vertex of intersection must have stabilizer $\G_v$ of the form $D_k\times \mZ _2$, 
with $k$ an {\it odd} integer. The
two incoming edges with stabilizer $D_2$ correspond to the special subgroups $\G _{e_i}$,
$G_{e_j}$ of 
$D_k \times \mZ _2$ of the form $\langle g \rangle \times \mZ _2$, where $g$ is one
of the two canonical reflections generating $D_k$. In this situation, there is a unique
element $\nu(v, e_i)$ of the group $\G _v$ whose action takes the geodesic extending
the edge $e_i$ to the geodesic extending the edge $e_j$.
Of course, we have the obvious relation $\nu(v, e_j) = \nu(v, e_i)^{-1}$. In concrete
terms, the element $\nu(v, e_i)$ can be described as follows: it is simply the longest 
word in the group $\G_V$ (with respect to the Coxeter generating set). Note that 
the element $\nu (v, e_i)$ is always a rotation inside $\iso(\mH^3)$.
Geometrically, this rotation of $\mH^3$ fixes the vertex $v$, and at the level of the 
spherical tessellation of the unit tangent sphere at $v$, takes the spherical triangle 
corresponding to the polyhedron $P$ to the spherical triangle which is directly opposite. 

Now assume that the loop of interest is given cyclicly by the sequence of 
edges and vertices
$\{e_1, v_1, e_2, v_2, \ldots ,e_n, v_n\}$.
We can consider each of the elements 
$\nu(v_i, e_i)$, and observe that the product 
$$\nu(v_1,e_1)\cdot \nu(v_2, e_2)\cdot \ldots \cdot \nu(v_n,e_n) \in \G$$ clearly 
stabilizes the geodesic $\gamma$ extending the edge $e_1$. Furthermore, 
this element acts via a minimal translation along $\gamma$, and hence
can be taken as an explicit description of the desired element $\tau$. Note that 
Deodhar \cite{De} considered similar elements in the general setting of Coxeter
groups (see also Davis' book \cite[Section 4.10]{Da}). We now are left with
verifying that this element $\tau$ leaves each of the two hyperplanes $P_1, P_2$ invariant.

To check this last statement, we first consider how the element $\nu(v,e_j)$ acts on the
hyperplanes whose intersection defines the vertex $v$. We note that there are three such
hyperplanes $P_1, P_2, P_3$, labelled so that $P_1\cap P_2 = e_j$ and $P_2\cap P_3=e_i$.
In particular, we have that the hyperplanes $P_1$ and $P_3$ intersect at an angle $\pi/k$,
with $k$ {\it odd}. Now from the explicit formula for $\nu(v, e_j)$, it is immediate that it
leaves $P_2$ invariant, and interchanges $P_1$ and $P_3$. In other words, the element
$\nu(v,e_j)$ interchanges the two hyperplanes whose intersection is the edge with internal
dihedral angle $\pi/k$. We would now like to use this to compute the effect of the element
$\tau$ on the original pair of hyperplanes.

We note that the loop of interest is a simple loop in the 1-skeleton $P^{(1)}$ of the polyhedron
$P$, hence can be thought of as a simple closed curve on $\partial P \cong S^2$. In particular,
this loop separates $\partial P$ into precisely two connected components $U_1, U_2$, and 
without loss of generality, we have that the faces 
$F_1,F_2$ whose intersection forms the edge $e_1$ satisfy $F_i\subset U_i$. Considering 
vertex $v_1$, let us think of the action of $\nu(v_1,e_1)$ on the two hyperplanes $P_1,P_2$. 
Since $v_1$ has degree three, we have an edge $f_1$ in $P^{(1)}$ incident to the loop, and this edge
must have internal angle of the form $\pi/k$ ($k$ odd). Observe that $f_1$ is contained in one
of the two components $U_1,U_2$. From the discussion in the previous paragraph, the effect 
of $\nu(v_1,e_1)$ is to interchange the hyperplanes extending the faces adjacent to $f_1$, and
to leave invariant the hyperplane extending the face opposite $f_1$. But observe that 
the two faces incident to $f_1$ are contained in the same component $U_i$, while the
opposite face to $f_1$ is contained in the other component. This forces the action of
$\nu(v_1,e_1)$ to respect the components $U_1$, $U_2$. Similarly, we see that each 
of the elements $\nu(v_i, e_i)$ respect the individual components, which forces their 
product $\tau$ to similarly respect the components. Since $\tau$ maps the hyperplane
$P_1$ extending $F_1$ to the hyperplane extending a face which:
\begin{itemize}
\item is incident to $e_1$, i.e. is either $F_1$ or $F_2$, and
\item is in the same connected component $U_1$ as $F_1$
\end{itemize}
we conclude that $\tau$ leaves $P_1$ (and likewise $P_2$) invariant. This forces $\alpha = Id$, 
ensuring that the stabilizer of the corresponding geodesic
must be isomorphic to the direct product $D_2\times \mZ$. For this 
group, the cokernel of the relative assembly map is the classic Bass Nil-group
$NK_*(\mZ D_2)$.

\vskip 10pt

Finally, let us comment on the number of copies of this Bass Nil-group that 
will appear in our computation. This requires counting $\G _P$ orbits of geodesics
whose stabilizer is infinite, and is fixed by a subgroup isomorphic to $D_2$. But this
is actually not too difficult. Indeed, such a geodesic
has to project to the subset of the 1-skeleton of $P$ consisting of
edges with internal dihedral angle $= \pi/2$. So given the polyhedron $P$,
restrict to this subset of the 1-skeleton $P^{(1)}$, obtaining a graph $\mG
_2$. Since the 1-skeleton of $P$ has the property that every vertex
has degree $\leq 4$, the subgraph $\mG_2$ inherits this same
property.  Now disconnect this graph along all vertices of degree
$3$ or $4$, resulting in a collection of intervals and loops. Finally,
disconnect the graph at any vertex of degree $2$, having the property
that the {\it third incident edge in $\mG$} has internal dihedral angle
which is {\it even}. 
Discard all intervals with the property that one of their endpoints
came from a vertex of degree $4$.  Then there is a bijective correspondence 
between:
\begin{enumerate}
\item connected components of the resulting graph $\hat \mG _2$,
\item $\G_p$-orbits of geodesics $\g \subset \mH^3$ with infinite
stabilizer and $\fix_{\G}(\g)\cong D_2$.
\end{enumerate}
By the discussion in the last couple of pages, we conclude that 
{\sl geodesics extending the edges with stabilizer $D_2$ contribute a total of 
$|\pi_0(\hat \mG _2)|\cdot NK_*(\mZ D_2)$
to the lower algebraic $K$-theory.}

\vskip 10pt

\noindent {\it Remark:} We note that for $i=0,1$ each of these Bass Nil-groups 
$NK_i(\mZ D_2)$ is isomorphic
to $\bigoplus _{\infty}\mZ _2$, the direct sum of countably many copies of $\mZ_2$
(see \cite[Lemma 5.3, 5.4]{LO1}).

%%%%%%%%%%%%%%%%%%%%%%%%%%%%%%%%%%%%%%

\section{Appendix: concrete examples.}

To illustrate the methods discussed in this paper, we now proceed to work
through the lower algebraic $K$-theory for some concrete examples. Let us 
start with a relatively simple class of examples. Consider the
groups $\Lambda _n$, $n\geq 5$, given by the following presentation:

$$\Lambda _n:= \Bigg\langle
y, z, x_i, \hskip 5pt 1\leq i \leq n
\hskip 5pt \Bigg|  \hskip 5pt
\parbox{2.5in}
{\centerline{$y^2, z^2,$}

\centerline{$x_i^2,  (x_ix_{i+1})^2, (x_iz)^3, (x_iy)^3$, \hskip 5pt $1\leq i \leq n $}}
\Bigg\rangle$$
The groups $\Lambda _n$ are Coxeter groups, and the presentation given
above is in fact the Coxeter presentation of the group. The corresponding Coxeter 
diagram appears in Figure 1(a).

\begin{example}
For the groups $\Lambda _n$ whose presentations are given above,
\begin{enumerate}

\item the Whitehead group is given by
$$Wh(\Lambda _n) \cong n\cdot 
NK_1(D_2) \cong \bigoplus _{\infty}\mZ _2;$$

\item the $\tilde K_0$  is given by
$$\tK _0(\mZ \Lambda _n) \cong n\cdot 
NK_0(D_2) \cong \bigoplus _{\infty}\mZ _2;$$

\item the $K_{-1}$ always vanishes.
\end{enumerate}
\end{example}

%%%%%%%%%%%%%%%%%%%%%% pictex diagram for Figure 1 %%%%%%%%%%%%%%%%%%%%%%%%%%%%%
\begin{figure}  %[htbp]
\label{graph}
\begin{center}
%Figure 1
\vbox{\hbox{\beginpicture
  \setcoordinatesystem units <1.8cm,1.8cm> point at -.4 2.5
  \setplotarea x from -1 to 5, y from -.3 to 3

% ``smallbullet''  adjust hskip and circle size as needed  
\def\smallbul{\hskip .8pt\circle*{2.2}}
%  \linethickness=1.4pt   %for  \putrule 
%test  \putrule from 0 0 to 1 0
  %\setlinear
  % Make some type plots a little thicker - like the \putrule
  \setplotsymbol ({\circle*{.5}})
  \plotsymbolspacing=.3pt        % Make a little smoother line (default .4pt).
  \plot 1 0  0 1  1 2  2 1  1 0 /
  \plot 1 0  .4 1  1 2  1.6 1  1 0 /
%  \setdashes
%\plot   .4 1  1.6 1 /     % in dots below
  \put {$\bullet$} at 1 0
  \put {$\bullet$} at 0 1
  \put {$\bullet$} at .4 1
  \put {$\bullet$} at 1.6 1
  \put {$\bullet$} at 2 1
  \put {$\bullet$} at .99 2
  \put {(a)} [t] at  1 -.3
{\setplotsymbol ({\circle*{1.2}}) %%For bolder dots
\put{$\cdots$} at 1 1 
 % \setdots
  %\plot   .4 1  1.6 1 /
  }
%% next diagram (b)
  \setsolid
  \plot 4 0   3.2 .6  3.7 1   4.3 1    4.8 .6   4 0 /
  \plot 4 1.4  3.2 2  3.7 2.4  4.3 2.4   4.8 2  4 1.4 /
%test  \putrule from 3.7 2.4 to 4.3 2.4
\setplotsymbol ({\circle*{1.2}}) %%For bolder dots
\setdots
%\setdashes
  \plot 4 0 4 1.4 /
  \plot 3.2 .6  3.2 2 /
  \plot 3.7 1  3.7 2.4 /
  \plot  4.3 1 4.3 2.4 /
  \plot  4.8 .6 4.8 2 /
  \plot   4 0 4 1.4 /
  \put {\smallbul} at 4 0
  \put {\smallbul} at 3.2 .6
  \put {\smallbul} at 3.7 1
  \put {\smallbul} at 4.3 1 
  \put {\smallbul} at 4.8 .6
  \put {\smallbul} at 4 1.4
  \put {\smallbul} at 3.2 2
  \put {\smallbul} at 3.7 2.4
  \put {\smallbul} at 4.3 2.4
  \put {\smallbul} at 4.8 2
  
  \put {(b)} [t] at  4 -.3
  \endpicture}}
\caption{ }
\end{center}
\end{figure}
%%%%%%%%%%%%%%%%%%%%%End of Figure 1 %%%%%%%%%%%%%%%%%%%%%%%%%%%%%%%%%%%%%%%%%%%

\begin{proof}
The groups $\Lambda _n$ arise as hyperbolic reflection groups, with underlying polyhedron
$P$ the product of an $n$-gon with an interval. An illustration of the polyhedron associated
to the group $\Lambda _5$ is shown in Figure 1(b), where again, ordinary edges have dihedral 
angle $\pi/3$, while dotted edges have dihedral angle $\pi/2$. In general, the polyhedron associated
to the group $\Lambda _n$ is combinatorially a product of the $n$-gon with an interval. This polyhedron
has exactly two faces which are $n$-gons, and the dihedral angle along the edges of these two faces
is $\pi/3$. All the remaining edges have dihedral angle $\pi/2$.

To begin with, we observe that for the associated polyhedron, every edge has stabilizer $D_2$ or 
$D_3$, giving us $E_k=0$ for $k\geq 4$. Furthermore, for the associated polyhedron, 
every vertex has stabilizer $S_4$, implying that $r=s=t=v=0$. Applying Theorem 6,
we immediately obtain that $K_{-1}(\mZ \Lambda _n)=0$. Applying Theorem 5, we also obtain
that $Wh(\Lambda _n) \otimes \mQ =0$. Note that the discussion in Section 4.2 actually establishes
that 
$$ H_{1}^{\Lambda _n}(E_{\fin} \Lambda _n ;\kz) =0$$
So to complete the computation of $Wh(\Lambda _n)$, we need to identify the remaining terms in
the splitting described in equation (2). From the discussion in our Section 5, we see that the next
step is to understand geodesics in $\mH^3$ whose projection under the $\Lambda _n$-action
lies in the $1$-skeleton of the polyhedron $P$. But it is easy to see that, up to the $\G _P$-action, these give:
\begin{itemize}
\item two distinct geodesics with stabilizer $D_3\times \mZ$, which project to the boundary
of the two $n$-gons appearing in $P$, and
\item $n$ distinct geodesics whose fixed subgroup is $D_2$, each of which projects to a
single edge lying between the two $n$-gons in the polyhedron $P$.
\end{itemize}
Now we know (Section 5, Case 4) that geodesics with stabilizer $D_3\times \mZ$ yield no 
contribution to the splitting in equation (2). On the other hand, each of the geodesics with
fixed subgroup $D_2$ contributes (Section 5, Case 5) a copy of $NK_1(\mZ D_2)$, which is 
isomorphic to a countable infinite direct sum of $\mZ _2$.

Finally, let us consider the case of $\tK _0$. Since we have $r=s=u=w=E_4=0$, we have
(see Section 4.2) that $ H_{0}^{\Lambda _n}(E_{\fin} \Lambda _n ;\kz) = 0$. Now the 
discussion in the previous paragraph, combined with the splitting in equation (2), gives
us that $\tilde K_0(\mZ \Lambda _n) \cong n\cdot NK_0(\mZ D_2)$. But it is known that these
Bass Nil-groups are isomorphic to the countable direct sum of infinitely many copies of $\mZ _2$,
concluding our computation.

\end{proof}

Next, let us consider a somewhat more complicated family of examples.  For
an integer $n\geq 2$, we consider the group $\G _n$, defined by the following
presentation:
$$\G _n:= \Bigg\langle x_1, \ldots , x_6   \hskip 5pt \Bigg|  \hskip 5pt
\parbox{3.3in}
{\centerline{$x_i^2, (x_1x_2)^n, (x_1x_5)^2, (x_1x_6)^2, (x_3x_4)^2, (x_2x_5)^2, (x_2x_6)^2$}

\centerline{$(x_1x_4)^3, (x_2x_3)^3, (x_4x_5)^3, (x_4x_6)^3, (x_3x_5)^3, (x_3x_6)^3$}}
\Bigg\rangle $$
Observe that the groups $\G_n$ are Coxeter groups, and that the presentation given
above is in fact the Coxeter presentation of the group. The corresponding Coxeter 
diagram appears in Figure 2(a).

\vskip 10pt

\begin{example}
For the groups $\G _n$ whose presentations are given above, 
\begin{enumerate}
\item the rationalized Whitehead group is given by $$Wh(\G _n)\otimes \mQ \cong 
\mQ ^{(3/2)\cdot (n+\epsilon(n) - 2\delta (n))}$$

\item the Whitehead group is given by
$$Wh(\G _n) \cong \mZ ^{(3/2)\cdot (n+\epsilon(n) - 2\delta (n))}\oplus (1+ 2\epsilon(n))\cdot 
NK_1(\mZ D_2) \oplus NK_1(\mZ D_n)$$

\item the $\tilde K_{0}$ is given by
$$\tilde K_{0}(\mZ \G_n) \cong 
\begin{cases} 
\frac{2\cdot \tK_0(\mZ [D_n\times \mZ_2])}{\tK_0(\mZ [D_n])} \oplus  (1+2\epsilon(n)) \cdot NK_0(\mZ D_2) \oplus NK_0(\mZ D_n)& n\geq 7 \\
\mZ_2 ^4 \oplus 5\cdot NK_0(\mZ D_2) \oplus NK_0(\mZ D_6) & n=6 \\
\mZ_4 \oplus 3\cdot NK_0(\mZ D_2) & n=5 \\
\mZ_4 \oplus 5\cdot NK_0(\mZ D_2) & n=4 \\
3\cdot NK_0(\mZ D_2) & n=3 \\
\mZ_2 \oplus 6\cdot NK_0(\mZ D_2) & n=2
\end{cases}$$

\item the $K_{-1}$ is given by
$$K_{-1}(\mZ \G_n) \cong 
\begin{cases} 
\mZ ^{1- 3\delta(n) + 3\tau(n) +2\sigma _2(n)} & n\geq 7 \\
\mZ ^5 & n=6 \\
\mZ ^2 & n=5 \\
0 & n=4 \\
\mZ ^2 & n=3 \\
0 & n=2 \\
\end{cases}$$
\end{enumerate}
\end{example}

\begin{proof}
To verify the results stated in this example, we first observe that the Coxeter
groups $\G _n$ arise as hyperbolic reflection groups, with underlying polyhedron
$P$ a combinatorial cube. The geodesic 
polyhedron associated to $\G _n$ is shown in the Figure 2(b). In the illustration,
the bold edge has internal dihedral angle $\pi/n$, the ordinary edges have 
internal dihedral angle $\pi/3$, and the dotted edges have internal 
dihedral angle $\pi/2$.

To compute the rationalized Whitehead group, we just apply our Theorem 5. The
polyhedron $P$ has five edges with stabilizer $D_2$, six edges with stabilizer $D_3$,
and one edge with stabilizer $D_n$. Evaluating equation (7) gives us that the rank of
$Wh(\G _n) \otimes \mQ$ is equal to ${(3/2)\cdot (n+\epsilon(n) - 2\delta (n))}$.

\vskip 5pt

%%%%%%%%%%%%%%%%%%%%%% pictex diagram for Figure 2 %%%%%%%%%%%%%%
\begin{figure}   %[htbp]
\label{graph}
\begin{center}
%Figure 2
\vbox{\beginpicture
  \setcoordinatesystem units <1.6cm,1.6cm> point at -.4 2.5
  \setplotarea x from -.8 to 3, y from -.6 to 2.5
  
  % ``smallbullet''  adjust hskip and size as needed
  \def\smallbul{\hskip .8pt\circle*{2.2}}
  \linethickness=.7pt
  %\putrule from 0 0 to 1 0
  %\setlinear
  % Make some type plots a little thicker - like the \putrule
%  \setplotsymbol ({\circle*{.5}})
  \plotsymbolspacing=.3pt        % Make a little smoother line (default .4pt).
  \plot .5 0  -.1 1  .5 2  1.1 1  .5 0  2 0  2 2 .5 2 /
%
%  \setdashes

  \put {$\bullet$} at .5 0
  \put {$\bullet$} at -.1 1
  \put {$\bullet$} at  .5 2
  \put {$\bullet$} at 1.1 1
  \put {$\bullet$} at .5 0
  \put {$\bullet$} at 2 0 
  \put {$\bullet$} at 2 2
  \put {$n$} [l] at  2.1 1
  \put {(a)} [t] at  1 -.3

%% next diagram (b) in figure 2 %%%%%%%%%%%%%%%
  \setsolid
  \plot 3 0  3 1.5  4.5 1.5  4.5 0  5.4 .75  4.56 .75 /
  \plot 4.44 .75   3.9 .75  3 0 /

%%Top bold line
\linethickness=2pt
  \putrule from 3.9 2.25 to 5.4 2.25 

\setdots
\setplotsymbol ({\circle*{1.2}}) %%For bolder dots
%\setdashes
  \plot 3 0  4.5 0  /
  \plot 3.9 .75   3.9 2.25  /
  \plot 5.4 .75  5.4 2.25  /
  \plot 3 1.5  3.9 2.25  /
  \plot 4.5 1.5   5.4 2.25  /
  \put {\smallbul} at 3 0 
  \put {\smallbul} at 4.5 0
  \put {\smallbul} at 3 1.5
  \put {\smallbul} at 4.5 1.5
  \put {\smallbul} at 3.9 .75
  \put {\smallbul} at 5.4 .75
  \put {\smallbul} at 3.9 2.25
  \put {\smallbul} at 5.4 2.25
  
  \put {(b)} [t] at  4 -.3
  \endpicture}

\caption{ }
\end{center}
\end{figure}
%%%%%%%%%%%%%%%%%%%%%%%%%%% End of figure 2 %%%%%%%%%%%%%%%

Now while Theorem 5 gives us a simple formula for the {\it rationalized} Whitehead
group, it only requires a little bit more work to calculate the integral Whitehead group. 
In order to do this, we exploit the splitting given in equation (2) (see Section 2.4):
$$
Wh(\G _n) \cong H_1^{\G _n}(E_{\fin}(\G _n);\mathbb K\mathbb Z^{-\infty})
\oplus \bigoplus _{V\in \mathcal V}H_1^{V}(E_{\fin}(V)\rightarrow*).
$$
For the groups $\G _n$, the first term in the splitting is computed in Section 4.2 
(see the argument for Theorem 5), and is free abelian of rank 
${(3/2)\cdot (n+\epsilon(n) - 2\delta (n))}$. As far as the second term 
in the splitting is concerned, we apply the procedure in Section 5. 
The edge with stabilizer $D_n$ contributes a single Bass Nil-group 
$NK_1(\mZ D_n)$ to the
splitting. The collection of edges with stabilizer $D_3$ form a closed
cycle, which is the image of a single geodesic in $\mH^3$. This gives
rise to a single Farrell Nil-group $NK_1(\mZ D_3, \alpha)$ (for a suitable
automorphism $\alpha \in \aut(D_3)$); but these groups are known to 
vanish. Finally, the edges with stabilizer $D_2$ correspond to either 
three or five geodesics in $\mH^3$, according to whether $n$ is odd 
or even. Overall, these contribute
$1+2\epsilon(n)$ copies of the Bass Nil-group $NK_1(\mZ D_2)$ to 
$Wh(\G _n)$. This completes our computation of $Wh(\G _n)$.

\vskip 5pt

Next, let us compute $K_{-1}(\mZ \G _n)$. We 
first observe that six of the eight vertices in $P$ have stabilizer $S_4$,
while the remaining two vertices have stabilizer $D_n\times \mZ_2$.
This tells us that, in the notation of Theorem 6, $s=t=0$. Now:
\begin{itemize}
\item if $n=2$, then we additionally have that $E_k=0$ for $k\geq 5$,
and $r=v=0$,
\item if $n=3$, then we have that $E_k=0$ for $k\geq 5$,
and $r=0$, while $v=2$,
\item if $n=4$, then we have that $E_k=0$ for $k\geq 5$,
and $r=v=0$,
\item if $n=5$, then we have $E_k=0$ for $k\geq 6$, $E_5=1$, and $r=v=0$,
\item if $n=6$, then we have $v=E_5=0$, $E_6=1$, $E_k=0$ for $k\geq 7$,
and $r=2$,
\item if $n\geq 7$, then we have $r=v=E_5=E_6=0$, and within the range
$k\geq 7$, all the $E_k$ vanish except $E_n=1$.
\end{itemize}
Applying Theorem 6 completes the computation of $K_{-1}(\mZ \G_n)$.

\vskip 5pt

Lastly, let us compute $\tK _0(\mZ \G_n)$. We again make use of the splitting 
given in equation (2). The first term in the splitting, 
$H_0^{\G _n}(E_{\fin}(\G _n);\mathbb K\mathbb Z^{-\infty})$, is given by a particularly
simple expression (see Section 4.2). Observe that, regardless of the value of 
$n$, we have that $s=u=0$; so the expression for the first term in the splitting
reduces to 
$$H_0^{\G _n}(E_{\fin}(\G _n) ;\kz) \cong (\mZ_2)^{2r + w} \oplus (\mZ_4)^{2E_4} \oplus Q_0$$
As such, we see that the first term vanishes, except in the following three cases:
\begin{itemize}
\item $n=2$, where $r=E_4=0$, and $w=1$, so the first term is $\mZ_2$
\item $n=4$, where $r=w=0$, $E_4=1$, and $Q_0\cong 0$, so the first term is $\mZ_4$,
\item $n=6$, where $r=2$, $w=E_4=0$, and $Q_0\cong 0$, and hence the first term is $\mZ_2^4$,
\item $n\geq 7$, where $r=w=E_4=0$, and hence the first term coincides with the group
$Q_0= 2\tK_0(\mZ [D_n\times \mZ _2]) / \tK_0(\mZ D_n)$.
\end{itemize}
For the second term in the splitting (2), we apply the methods from Section 5. The
second term is determined by orbits of geodesics in $\mH ^3$ whose stabilizer is infinite,
and which project to the $1$-skeleton of the geodesic polyhedron associated to the group 
$\G _n$. But these geodesics were determined earlier, when we discussed the computation
of the Whitehead group. An identical analysis gives us that the second term is just
$(1+ 2\epsilon (n)) \cdot NK_0(\mZ D_2) \oplus NK_0(\mZ D_n)$. Combining these two terms
completes the computation of the $\tK_0$, and concludes the computations for this
example.
\end{proof}

\end{document}